\documentclass[11pt,letterpaper]{amsart}

\usepackage[T1]{fontenc}
\usepackage{lmodern}
\usepackage{hyperref}
\usepackage{etex}
\usepackage[shortlabels]{enumitem}
\usepackage{amsmath}
\usepackage{amsxtra}
\usepackage{amscd}
\usepackage{amsthm}
\usepackage{amsfonts}
\usepackage{amssymb}
\usepackage{eucal}
\usepackage[all]{xy}
\usepackage{graphicx}
\usepackage{tikz-cd}
\usepackage{mathrsfs}
\usepackage{subfiles}
\usepackage{mathpazo}
\usepackage{mathtools}
\usepackage[colorinlistoftodos, textsize=tiny]{todonotes}
\usepackage{morefloats}
\usepackage{pdfpages}
\usepackage{thm-restate}
\usepackage[utf8]{inputenc}
\usepackage{epigraph}
\usepackage{csquotes}
\usepackage[margin=1in]{geometry}
\usepackage{adjustbox}
\usepackage{scalerel}
\usepackage{stackengine}
\stackMath
\newcommand\reallywidehat[1]{%
\savestack{\tmpbox}{\stretchto{%
  \scaleto{%
    \scalerel*[\widthof{\ensuremath{#1}}]{\kern-.6pt\bigwedge\kern-.6pt}%
    {\rule[-\textheight/2]{1ex}{\textheight}}%
  }{\textheight}%
}{0.5ex}}%
\stackon[1pt]{#1}{\tmpbox}%
}
\graphicspath{ {images/} }

\RequirePackage{color}
\definecolor{myred}{rgb}{0.75,0,0}
\definecolor{mygreen}{rgb}{0,0.5,0}
\definecolor{myblue}{rgb}{0,0,0.65}

\hypersetup{citecolor=blue}
\usepackage{tikz}
\usetikzlibrary{matrix,arrows,decorations.pathmorphing}

\theoremstyle{plain}
\newtheorem{theorem}[subsection]{Theorem}

\newtheorem{proposition}[subsection]{Proposition}
\newtheorem{lemma}[subsection]{Lemma}

\theoremstyle{definition}
\newtheorem{definition}[subsection]{Definition}
\newtheorem{remark}[subsection]{Remark}

\theoremstyle{remark}

\numberwithin{equation}{subsection}
\newcommand\nc{\newcommand}
\nc\on{\operatorname}
\nc\renc{\renewcommand}

\newcommand\bc{\mathbb C}

\newcommand\bg{\mathbb G}
\newcommand\bh{\mathbb H}

\newcommand\bl{\mathbb L}

\newcommand\bp{\mathbb P}
\newcommand\bq{\mathbb Q}

\newcommand\bv{\mathbb V}
\newcommand\bw{\mathbb W}

\newcommand\bz{\mathbb Z}

\newcommand*{\shom}{\mathscr{H}\kern -.5pt om}
\newcommand*{\stor}{\mathscr{T}\kern -.5pt or}
\newcommand*{\sext}{\mathscr{E}\kern -.5pt xt}

\makeatletter
\providecommand\@dotsep{5}
\renewcommand{\listoftodos}[1][\@todonotes@todolistname]{%
\@starttoc{tdo}{#1}}
\makeatother

\makeatletter
\newcommand{\customlabel}[2]{\protected@write \@auxout {}{\string \newlabel {#1}{{#2}{\thepage}{#2}{#1}{}} }\hypertarget{#1}{#2}}

\DeclareMathOperator\id{id}

\renewcommand\hom{\mathrm{Hom}}

\DeclareMathOperator\End{End}
\DeclareMathOperator\sym{Sym}
\DeclareMathOperator\pgl{PGL}

\DeclareMathOperator\sgn{sgn}

\DeclareMathOperator\ab{ab}

\DeclareMathOperator\gal{Gal}
\DeclareMathOperator\tr{tr}

\DeclareMathOperator\aut{Aut}
\DeclareMathOperator\gl{GL}

\renewcommand\sl{\mathrm{SL}}

\DeclareMathOperator\so{SO}

\DeclareMathOperator\ad{ad}

\DeclareMathOperator\diag{diag}

\DeclareMathOperator\orb{orb}

\DeclareMathOperator{\betti}{Betti}
\DeclareMathOperator{\higgs}{Higgs}

\DeclareMathOperator{\coarse}{coarse}

\DeclareFontFamily{U}{wncy}{}
\DeclareFontShape{U}{wncy}{m}{n}{<->wncyr10}{}
\DeclareSymbolFont{mcy}{U}{wncy}{m}{n}
\DeclareMathSymbol{\Sha}{\mathord}{mcy}{"58}

\def\listtodoname{List of Todos}
\def\listoftodos{\@starttoc{tdo}\listtodoname}

\title{Privileged $\sl_3$-local systems are motivic}

\author{Julia Meng, Charlie Wu}

\usepackage{microtype}

\begin{document}

\begin{abstract}
Let $X$ be a smooth projective variety over the complex numbers, and let $\bv$ be an irreducible privileged $\sl_3$-local system on $X$ in the sense of Esnault and Groechenig. We show that $\bv$ is of geometric origin. We also give an example of a privileged but non-rigid $\sl_3$-local system.
\end{abstract}

\maketitle

\setcounter{tocdepth}{1}
\tableofcontents

\section{Introduction}\label{sec:introduction}

The goal of this note is to show that privileged rank $3$ local systems with trivial determinant on smooth projective varieties over the complex numbers (in the sense of Esnault and Groechenig \cite[Definition 2.2]{Esnault_Groechenig}) are of geometric origin.

\begin{definition}\label{def:geometric}
Let $X$ be a smooth variety over the complex numbers and $\bv$ a $\bc$-local system on $X$. We say that $\bv$ is \emph{of geometric origin} if there is a Zariski-dense open subset $U \subseteq X$, a smooth proper map $f: Y \to U$, and an integer $i \geq 0$ such that $\bv|_U$ is isomorphic to a sub-quotient of $R^i f_* \bc$.
\end{definition}

Let $X$ be a smooth projective variety over $\bc$, and $\bl$ a rank one torsion local system. We write $M_{\betti}(X, r, \bl)$ for the moduli space of irreducible rank $r$ local systems on $X$ whose determinant is isomorphic to $\bl$. A \emph{rigid} local system is an isolated point in $M_{\betti}(X, r, \bl)$, and a cohomologically rigid local system is a reduced isolated point in $M_{\betti}(X, r, \bl)$. The local systems we work with are quasi-privileged local systems, which are a generalization of cohomologically rigid local systems.

\begin{definition}\label{def:quasi-privileged}
Let $\bv$ be an irreducible rank $r$ local system on $X$ with determinant $\bl$. We say that $\bv$ is \emph{quasi-privileged} if for all but finitely many (isomorphism classes of) irreducible local systems $\bw \in M_{\betti}(X,r, \bl)$,
\begin{align*}
\dim H^1(X, \End^{0}(\bv)) \neq \dim H^1(X, \End^{0}(\bw)).
\end{align*} 
\end{definition}

\begin{remark}
This is a slight generalization of the notion of \emph{privileged} local systems, first defined and studied by Esnault and Groechenig \cite{Esnault_Groechenig}. A cohomologically rigid local system $\bv$ is quasi-privileged, since the tangent space to $M_{\betti}(X,r,\bl)$ at $\bv$ is isomorphic to $H^1(X, \End^{0}(\bv))$.
\end{remark}

The quasi-privileged local systems enjoy many of the same properties as cohomologically rigid local systems which we will discuss in \autoref{prop:privileged galois}. As a special case, see \cite{Esnault_Groechenig} and \cite{Esnault_Groechenig_Rigid} where Esnault and Groechenig show that both cohomologically rigid and privileged local systems underlie variations of Hodge structure and are defined over rings of integers of number fields.

Our main results are as follows:
\begin{theorem}\label{thm:quasi-privileged implies motivic}
Let $X$ be a connected smooth projective variety over $\bc$, and let $\bv$ be an irreducible rank $3$ local system on $X$ with trivial determinant. If $\bv$ is quasi-privileged, then $\bv$ is of geometric origin.
\end{theorem}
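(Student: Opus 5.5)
The plan is to follow the strategy used for rigid rank $3$ local systems (Langer--Simpson): use the factorization theorem for rank $\leq 3$ local systems to reduce to a few geometric cases, and use quasi-privilegedness to supply the arithmetic input. The argument has three steps.

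\textbf{Step 1: arithmetic and Hodge properties.} First I would establish (this is presumably the content of \autoref{prop:privileged galois}) that a quasi-privileged $\bv$ has the following properties.
\begin{enumerate}
\item[(a)] $\bv$ is defined over $\mathcal{O}_K$ for a number field $K$.
\item[(b)] Every Galois conjugate $\bv^\sigma$ is again quasi-privileged.
\item[(c)] $\bv$ underlies a $\bc$-VHS, and so does every conjugate.
\end{enumerate}
The reasoning is as follows. The locus of $\bw \in M_{\betti}(X,3,\mathcal{O})$ with $\dim H^1(\End^0 \bw) = \dim H^1(\End^0\bv)$ is constructible and defined over $\bq$. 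By hypothesis it is a finite set, so it is permuted by $\aut(\bc)$; this gives (a) over a number field. The $\bc^*$-action on $M_{\higgs}$ preserves $\dim H^1$, so it fixes each point of this finite set; this gives (c) by Simpson. For integrality I would argue as Esnault--Groechenig do. For $p$ large, the finite set is stable under the Frobenius/companion correspondences on $\overline{\bq}_\ell$-local systems on the reduction of $X$ mod $p$. Concretely, the integrality of traces comes from the companions (Drinfeld, Lafforgue), since $\dim H^1$ is preserved under passage to companions. This gives (a) over $\mathcal{O}_K$, and the same finite set argument gives (b).

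\textbf{Step 2: factorization.} Next, apply the Corlette--Simpson/Langer--Simpson dichotomy for rank $3$ representations with trivial determinant and Zariski dense monodromy in $\sl_3$. Either $\bv$ factors through a map $X \to Z$ onto a DM orbicurve (up to finite étale cover), or $\bv$ is pulled back from a polydisk Shimura modular stack. If the monodromy is not Zariski dense, it is contained in a smaller group. That group is either imprimitive/reducible, in which case we reduce to rank $1$ and $2$ plus finite monodromy, or orthogonal $\so_3 \cong \pgl_2$, in which case we reduce to rank $2$ after a finite cover. I would handle these cases by the known results that rank $2$ local systems with integral conjugates underlying VHS are motivic (Corlette--Simpson), together with the fact that torsion rank $1$ local systems are motivic. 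In the Shimura case, $\bv$ is a subquotient of the pullback of a universal family of abelian varieties, so it is motivic.

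\textbf{Step 3: the orbicurve case (the main obstacle).} In this case $\bv = f^*\bv'$ for some $f: X\to Z$ with connected fibres. I would show $\bv'$ is quasi-privileged, or at least rigid in a suitable sense, on $Z$. The key input is that $H^1(X,\End^0 f^*\bw') \supseteq H^1(Z,\End^0\bw')$, with the difference controlled by the multiple fibres/orbifold structure. On a curve, $\dim H^1(Z, \End^0 \bw')$ is constant on the whole component, by Euler characteristic, once local monodromies are fixed. So the delicate point is that infinitely many $f^*\bw'$ would share the $H^1$-dimension of $\bv$ unless the moduli of $\bw'$ with the given local monodromy conjugacy classes is zero-dimensional. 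I expect this to be the hard step. My first attempt would be to prove that $\dim H^1(X,\End^0 f^*\bw')$ is constant on the positive-dimensional family obtained by deforming $\bv'$ with fixed local monodromy. This would contradict quasi-privilegedness unless that family is a point, so $\bv'$ is a physically rigid orbifold local system on $Z$. Then Katz's middle convolution algorithm shows $\bv'$ is of geometric origin, and pulling back along $f$ (over a dense open where $f$ is smooth) gives the theorem.
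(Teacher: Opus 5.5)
There is a genuine gap in Step 3, at exactly the point you flag as hard. The Hochschild--Serre sequence for $1\to K\to\pi_1(X)\to\pi_1^{\orb}(C_f)\to 1$ is one input. The other is $H^2(\pi_1^{\orb}(C_f),\ad\rho)\cong H^0(\pi_1^{\orb}(C_f),\ad\rho)^\vee=0$, by orbifold Poincar\'e duality, which applies because Zariski density forces $\pi_1^{\orb}(C_f)$ not to be virtually abelian. Together they give $h^1(X,\End^0\bv)=h^1(\pi_1^{\orb}(C_f),\ad\rho)+\dim\hom_{\pi_1^{\orb}(C_f)}(\End^0\bv,H^1(K,\bc))$, where $K$ is a quotient of $\pi_1$ of a general fibre $F$.

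So the discrepancy is governed by $H^1$ of the fibres, not by the multiple fibres or orbifold points. Those are already built into $h^1(\pi_1^{\orb}(C_f),\ad\rho)$. Since $H^1(K,\bc)$ is a fixed finite-dimensional semisimple representation, the Hom term vanishes generically along a family $\rho_t$ and can only jump \emph{up} at isolated points. A jump exactly at $\bv$ would make $\bv$ the unique point of its family with larger $h^1$. That is perfectly compatible with quasi-privilegedness, and it is the same mechanism as in the privileged non-rigid example of \autoref{sec:examples}. So constancy of $h^1(X,\End^0 f^*\rho_t)$ along the family is not a general deformation-theoretic fact. It is equivalent to $\hom_{\pi_1^{\orb}(C_f)}(\End^0\bv,H^1(K,\bc))=0$, and your proposal gives no argument for this.

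The missing input is Hodge-theoretic, and it is where the Hodge numbers $(1,1,1)$ enter. For such $\bv$, $\End^0\bv$ is an irreducible VHS whose Higgs bundle has five graded pieces. On the other hand, $H^1(K,\bc)\hookrightarrow H^1(F,\bc)$ is $\pi_1(C^\circ)$-equivariant, and every sub-VHS of the weight one $R^1f_*\bc$ has at most two graded pieces. Hence the Hom vanishes. With this in hand, for $\rho_t$ near the good point $\rho$ you get $h^1(X,\End^0 f^*\rho_t)\ge h^1(\pi_1^{\orb}(C_f),\ad\rho_t)=h^1(\pi_1^{\orb}(C_f),\ad\rho)=h^1(X,\End^0\bv)$. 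Upper semicontinuity gives equality, which contradicts quasi-privilegedness unless $H^1(X,\End^0\bv)=0$. Langer--Simpson's theorem on rigid integral $\sl_3$-representations then finishes; Katz is only used inside it.

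Your Step 2 dichotomy cannot feed this argument, because it does not record the Hodge type. The ``orbicurve versus polydisk Shimura stack'' alternative is Corlette--Simpson's rank two statement. The input actually needed is Langer--Simpson's theorem: a Galois conjugate of type $(1,1,1)$ with Zariski dense monodromy projectively factors through an orbicurve. The remaining case is that all conjugates are of weight one type, and then $\bv$ comes from an abelian scheme.

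Two smaller points:
\begin{enumerate}
\item $\bv$ only \emph{projectively} factors, so $\bv=f^*\bv'$ is false in general. Passing to a finite \'etale cover to linearize it would lose quasi-privilegedness. Instead, work with $\ad\rho$ and lift the $\pgl_3$-family back to $\sl_3$, using that the lifting obstruction in $H^2(X,\mu_3)$ is locally constant.
\item In the $\so_3$ case, before the weight one argument applies, you must verify that the rank two $\bw$ with $g^*\bv\cong\sym^2\bw$ is semisimple. You must also check that it and all its conjugates underlie VHS and are integral.
\end{enumerate}
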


We also give an example of a privileged $\sl_3$-local system on a smooth projective variety which is \emph{not} rigid. To our knowledge, this is the first example of a privileged local system which is not rigid, and the construction is heavily inspired by the work of \cite{dejong2022rigidnoncohomologicallyrigidlocal} where de Jong, Esnault, and Groechenig give an example of a rigid but not cohomologically rigid local system. Like \cite{dejong2022rigidnoncohomologicallyrigidlocal}, our construction is motivic in nature (which should not be surprising due to \autoref{thm:quasi-privileged implies motivic}).

\begin{theorem}\label{thm:privileged non-rigid}
There is a smooth projective variety $X$ and an irreducible $\sl_3$-local system $\bv$ on $X$ which is privileged but not rigid.
\end{theorem}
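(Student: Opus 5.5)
The plan is to build $\bv$ as a pushforward of a rank one local system along a cyclic étale cover of degree $3$. Such pushforwards move in positive-dimensional families, which gives non-rigidity. Privilegedness should then follow from an explicit computation of $H^1(X,\End^0)$ via Mackey theory, combined with control of the whole character variety through the fundamental group.

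\textbf{Construction.} Choose a smooth projective variety $X'$ with a free action of $G=\bz/3$ such that $G$ acts on $H^1(X',\bc)$ without invariants. Then $X=X'/G$ is smooth projective with $b_1(X)=0$, so $M_{\betti}(X,1,\bc)$ is finite. Natural candidates are quotients $(C_1\times C_2\times C_3)/G$ with $G$ acting diagonally, freely on one factor and via an automorphism of order $3$ without invariant $1$-forms on the others, in the spirit of \cite{dejong2022rigidnoncohomologicallyrigidlocal}. Let $\pi\colon X'\to X$ be the quotient map. For a rank one local system $\bl$ on $X'$, set $\bv_{\bl}=\pi_*\bl$. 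This is a rank $3$ local system, and it is irreducible exactly when $\bl,\sigma^*\bl,\sigma^{2*}\bl$ are pairwise non-isomorphic, where $\sigma$ generates $G$.

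\textbf{Trivial determinant and non-rigidity.} The determinant of $\bv_{\bl}$ is $\det(\pi_*\bc)\otimes \mathrm{Nm}(\bl)$. The norm map $\mathrm{Nm}$ lands in the finite group $M_{\betti}(X,1,\bc)$, so on a connected component of the torus $\hom(\pi_1(X'),\bc^\times)$ the determinant is constant. After twisting by a fixed $\bl_0$ it is trivial on a positive-dimensional subtorus $T$, namely the kernel of $\mathrm{Nm}$, which has dimension $b_1(X')$. Since $G$ has no invariants on $H^1(X')$, a generic $\bl\in T$ is not $G$-invariant, so $\bv_{\bl}$ is irreducible. The map $T\to M_{\betti}(X,3,\bc)$ is finite-to-one, being at most $3$-to-$1$ up to the $G$-action. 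Hence every irreducible $\bv_{\bl}$ lies on a positive-dimensional family and is not rigid.

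\textbf{Cohomology computation and choice of $\bv$.} By Mackey's formula and $b_1(X)=0$,
\[
H^1(X,\End^0\bv_{\bl})\cong H^1(X,\End\bv_{\bl})\cong \bigoplus_{g\in G}H^1\big(X',\bl^{-1}\otimes g^*\bl\big).
\]
The summand for $g=1$ equals $H^1(X',\bc)$, of dimension $b_1(X')$. For $g\neq 1$ and $\bl^{-1}\otimes g^*\bl$ nontrivial, the summand is the twisted $H^1$ of a rank one system. By the Green–Lazarsfeld / Simpson structure theorem, this twisted $H^1$ is generically constant on $T$ and jumps exactly on the cohomology jump loci, which are finite unions of torsion-translated subtori.

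I will arrange $X'$, for instance by taking curves with few elliptic quotients, so that the jump loci of $\bl\mapsto \bl^{-1}\otimes\sigma^*\bl$ meet the family only in finitely many points. I then take $\bv=\bv_{\bl}$ for one such special $\bl$. Then $h^1(\End^0\bv)$ exceeds the generic value on $T$, and only finitely many points of the family share it.

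\textbf{Main obstacle.} Privilegedness is a condition on \emph{all} irreducible $\bw\in M_{\betti}(X,3,\bc)$, not just on the induced family. I therefore have to classify the irreducible rank $3$ representations of $\pi_1(X)$, which is an extension of $G$ by $\pi_1(X')$. I will use Clifford theory for this: either $\bw|_{X'}$ is irreducible, or $\bw$ is induced from a character as above, or $\bw|_{X'}$ is a sum of three characters permuted trivially by $G$.

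For the case where $\bw|_{X'}$ is irreducible, I plan to choose $X'$ with controlled $\pi_1$. For example, if $\pi_1(X')$ is a product of surface groups, its irreducible rank $3$ representations are external tensor products, which constrains them heavily. I then need to show that every non-induced component either is finite or has generic $h^1$ different from $h^1(\End^0\bv)$, using that $h^1$ is upper semicontinuous and constant on a dense open subset of each component.

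If this classification proves unwieldy, my fallback is to pick $X'$ so that these components are empty, for instance by forcing every irreducible $3$-dimensional representation of $\pi_1(X')$ to be abelian after restriction. That would leave only the induced families, where the computation above applies.
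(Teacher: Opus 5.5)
\textbf{Verdict: there is a genuine gap.} Your non-rigidity argument is fine, but privilegedness is never established, and the mechanism you propose for it fails on the spaces you suggest.

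\textbf{Where the jumps would have to come from.} The $g=1$ summand $H^1(X',\bc)$ is constant along the family. So a jump of $h^1(X,\End^0\pi_*\bl)$ at an irreducible $\pi_*\bl$ can only come from the character $\phi(\bl)=\bl\otimes\sigma^*\bl^{-1}$, which is nontrivial precisely because $\pi_*\bl$ is irreducible. Since $1-\sigma^*$ is an isogeny of the identity component of the character torus, you need nontrivial characters of $\pi_1(X')$ at which the twisted $h^1$ jumps at an \emph{isolated} point.

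\textbf{Your candidates have no such characters.}
\begin{enumerate}
\item A diagonal $\bz/3$-action on $C_1\times C_2\times C_3$ is free only if it is free on some factor. A free $\bz/3$-quotient of a curve has genus $\geq 1$, so $H^1(X',\bc)^G\neq 0$, which contradicts your own hypothesis.
\item Suppose you restore freeness with a simply connected factor $P$. By K\"unneth, the nontrivial part of the jump locus of $P\times C_2\times C_3$ is the union of the punctured factor tori of $C_2$ and $C_3$, where $h^1=2g_i-2$ is constant. Since $\sigma^*$ preserves the factors, every jump on your family occurs along a positive-dimensional stratum with constant $h^1(\End^0)$.
\item Suppose $\pi_1(X')$ is abelian, for instance $P\times A$ with $A$ abelian, which is where your fallback leads. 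Then $h^1(X',\chi)=0$ for $\chi\neq 1$. Clifford theory shows every irreducible rank $3$ local system on $X$ is some $\pi_*\bl$: an invariant character extends across the cyclic quotient, and the twisted representation would then factor through $\bz/3$. All of these have $h^1(\End^0)=b_1(X')$, so nothing is privileged.
\end{enumerate}

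\textbf{The structural obstruction.} The degeneration that naturally produces extra cohomology, namely $\sigma^*\bl\cong\bl$, makes $\pi_*\bl$ reducible, because $\bz/3$ is abelian. On top of this, the global classification (other components of $\hom(\pi_1(X'),\bc^\times)$, non-induced $\bw$) is required by the definition and is left open as your ``main obstacle''.

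\textbf{The missing idea: a non-abelian finite quotient.} The paper takes $X=(A_3\times A_2\times P_3)/S_4$, with $\pi_1(X)=B\rtimes S_4$, where $B=H_1(E,\bz)\otimes(\Lambda_3\oplus\Lambda_2)$ is abelian and $H^1(B,\bc^*)^{S_4}=0$. It takes $\bv$ to factor through $\rho_3$.
\begin{itemize}
\item At $\bv$ the restriction to $B$ is trivial, so $h^1(\End^0\bv)=\dim\hom_{S_4}(B\otimes\bc,\End^0\bv)=4$, with the $\Lambda_3$-part of $B$ contributing. Yet $\bv$ stays irreducible because $\rho_3$ is.
\item Clifford theory over $B$ shows every other irreducible $\bw$ with the same determinant restricts to $B$ as three distinct characters permuted via $S_4\twoheadrightarrow S_3$. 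Hence $h^1(\End^0\bw)=\dim\hom_{S_4}(B\otimes\bc,\Lambda_2\otimes\bc)=2$, so $\bv$ is the unique point with value $4$.
\item The $\Lambda_2$-part of $B$ gives the two-parameter family $\Psi_{s,t}$ through $\bv$, which shows non-rigidity.
\item Twisting by $\det\bv=\sgn$ makes the determinant trivial.
\end{itemize}
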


\subsection{Methods}\label{subsec:methods}
Our methods to prove \autoref{thm:quasi-privileged implies motivic} are heavily inspired by the work of Langer-Simpson \cite{Langer_Simpson}. In fact, they observe that it may be possible to replace rigidity with a zero-dimensional stratum of a stratification by dimensions of cohomology groups \cite[top of page 37]{Langer_Simpson} and obtain results similar to those in their work.

Their main theorem is \cite[Theorem 1.3]{Langer_Simpson}:
\begin{theorem}\label{thm:rank 3 rigid}
Let $X$ be a smooth projective variety. Then, every irreducible integral rigid representation $\rho: \pi_1(X) \to \sl_3(\bc)$ is of geometric origin.
\end{theorem}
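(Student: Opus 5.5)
The plan is to use the standard route from rigidity to motivicity: integrality plus rigidity gives complex variations of Hodge structure, and the possible Hodge types for rank $3$ are few enough to handle one by one. Let $\rho$ be irreducible, integral and rigid. Then $\rho$ is defined over $\mathcal{O}_K$ for some number field $K$. By Simpson's theorem, $\rho$ underlies a complex VHS, and so does every Galois conjugate $\rho^\sigma$ for $\sigma \in \gal(\bar{\bq}/\bq)$, since $\rho^\sigma$ is again rigid and irreducible. Integrality means the direct sum $\bigoplus_\sigma \rho^\sigma$ over the embeddings of $K$ descends to a $\bz$-local system. Our aim is to show that, after possibly passing to a finite étale cover or a suitable auxiliary construction, this sum carries a polarizable $\bz$-VHS that is visibly motivic: either of weight one, hence a family of abelian varieties, or pulled back from a curve, where Katz's algorithm applies.

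\textbf{Step 1 (classification by Hodge type).} For $\sl_3$, each conjugate $\rho^\sigma$ as a complex VHS has one of three shapes: unitary, with one Hodge piece; of type $(1,2)$ or $(2,1)$; or of type $(1,1,1)$, with a Higgs field nilpotent of order $3$. If every conjugate is unitary, the image of $\pi_1(X)$ lies in a discrete subgroup of a compact group, so the monodromy is finite, and finite monodromy is trivially of geometric origin.

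\textbf{Step 2 (types $(1,2)$ and $(2,1)$).} Suppose every non-unitary conjugate has two Hodge pieces. After shifting weights, each $\rho^\sigma$ becomes a VHS with Hodge numbers concentrated in two adjacent degrees. The conjugate of type $(1,2)$ and the one of type $(2,1)$ are complex conjugate to each other, and this is compatible with the CM structure of the field generated by traces. Deligne's construction then makes $\bigoplus_\sigma \rho^\sigma$, or a twisted version obtained by tensoring with a rank one CM-type unitary system, into a polarized $\bz$-VHS of weight one. By Deligne's theorem, such a VHS is $R^1$ of a family of abelian varieties over $X$, so $\rho$ is a summand of geometric origin. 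The point to check here is that the Hodge types can be chosen coherently across all $\sigma$. This comes down to a careful bookkeeping of signatures, since the monodromy is in a unitary group of signature $(2,1)$ or $(3,0)$.

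\textbf{Step 3 (type $(1,1,1)$, the main obstacle).} If some conjugate has three nonzero Hodge pieces, the weight-one trick fails. Here I would use the nilpotent Higgs field $\theta$ to produce nonzero symmetric differentials, or a nontrivial spectral data. The goal is a factorization: either the period map, equivalently $\rho$, factors through a map $X \to C$ to an orbifold curve, or the Higgs field has a special degenerate shape that again lets one reduce to Step 2, for instance via $\wedge^2$ or $\sym^2$ combined with the isomorphism between $\sl_3$ and $\mathrm{SO}$-type groups in small rank. Once $\rho$ factors through a curve $C$, rigidity on $X$ implies rigidity on the orbifold curve. Katz's middle convolution algorithm then writes $\rho$ as obtained from rank one systems by geometric operations, so it is of geometric origin. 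I expect the factorization statement, a Zuo-type or Corlette–Simpson-type result for rank $3$ with order $3$ nilpotent Higgs fields, to be the hardest part. My first attempt would be to study the image of $\theta: E^{2,0} \to E^{1,1} \otimes \Omega^1_X$ and show that it forces a fibration whenever the rank of the period map is at most one.
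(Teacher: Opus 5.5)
Your Steps 1--2 and the end of Step 3 follow the route the paper attributes to Langer--Simpson; the paper does not reprove the statement but cites \cite[Theorem 1.3]{Langer_Simpson}. Weight-one cases go to abelian schemes via the sum of Galois conjugates. In type $(1,1,1)$ there is a dichotomy by the Zariski closure of the monodromy. Zariski-dense monodromy gives a projective factorization through an orbicurve, followed by Katz. Monodromy in $\so_3$ gives $\sym^2$ of a rank-two system after a finite cover, which is again weight one (\autoref{thm: reduction to curve}, \autoref{prop:sym is motivic}).

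The genuine gap is that Step 3 does not prove the factorization statement \cite[Theorem 5.1]{Langer_Simpson}. That statement is the real content of the theorem, and the mechanism you sketch cannot produce it as stated. In type $(1,1,1)$ the Higgs field is a pair of twisted one-forms $\theta_1\colon E^2\to E^1\otimes\Omega^1_X$ and $\theta_2\colon E^1\to E^0\otimes\Omega^1_X$, and $\theta\wedge\theta=0$ reads $\theta_2\wedge\theta_1=0$. Both therefore factor through a single saturated rank-one subsheaf $M\subset\Omega^1_X$, so the period map \emph{always} has rank at most one.

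Your proposed lemma, that rank at most one forces a fibration, is therefore the whole theorem, and it is false without Zariski density. Take a smooth compact quotient of a product of two upper half-planes by an irreducible arithmetic lattice, and let $W$ be the rank-two system given by one projection of the lattice to $\sl_2(\mathbb{R})$. Then $\sym^2 W$ is a rigid integral $(1,1,1)$ variation whose Higgs foliation $\ker(T_X\to M^\vee)$ has dense leaves, so it comes from no fibration. The shape of $\theta$ is the same in the $\sl_3$ and $\so_3$ cases, which is why the dichotomy must be made by the Zariski closure and not by a ``degenerate shape'' of the Higgs field.

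What you need is an algebraic-integrability statement for this codimension-one foliation that uses Zariski density in $\sl_3$ essentially, and nothing in the proposal supplies it. Spectral data cannot help, since the Higgs field of a variation of Hodge structure is nilpotent and its spectral cover is trivial. Passing to $\wedge^2$ gives nothing new, since $\wedge^2\bv\cong\bv^\vee$ for $\sl_3$.

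The passage from the orbicurve to Katz also needs more than you state. \cite[Theorem 8.4.1]{Katz_1996} concerns physically rigid systems on $\bp^1$ minus finitely many points, so you must first show the coarse curve has genus $0$. This follows from $H^0=H^2=0$ and the count $\dim H^1(\pi_1^{\orb}(C_f),\ad\rho)=8(2g-2)+\sum_i\dim\mathcal{C}_i$, where $\mathcal{C}_i\subset\pgl_3(\bc)$ is the conjugacy class of the local monodromy at the $i$-th orbifold point. Rigidity forces this count to vanish, which is impossible for $g\geq 1$ when $\rho$ is Zariski dense.

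Moreover, you only have a $\pgl_3$-representation. You must lift it to an $\sl_3$-local system on the punctured line, which is possible because that fundamental group is free. You then observe that the lifted local monodromies have finite order, since their $n_i$-th powers are scalar in $\sl_3$. Finally, you match the pullback with $\bv$ on a dense open set up to a character of order dividing $3$. These steps are routine, but combined with the missing factorization theorem, Step 3 is a program rather than a proof. In Step 2, by contrast, no CM twist or signature bookkeeping is needed: each conjugate can be shifted into weight one separately, as in \autoref{prop:weight one vhs}.
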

Their argument reduces the problem to considering rigid local systems underlying variations of Hodge structure of type $(1,1,1)$ which projectively factor through an orbicurve. Then due to results of Katz on rigid local systems over curves \cite[Theorem 8.4.1]{Katz_1996}, Langer and Simpson conclude that the rigid local system is of geometric origin.

Our strategy is similar. We show in \autoref{prop:privileged galois} that quasi-privileged local systems and their Galois conjugates have the properties we expect  local systems of geometric origin to have, namely that they underlie variations of Hodge structure and are integral. Then, we break up the argument into three cases.

\begin{enumerate}
\item[Case $1$:] No Galois conjugate $\sigma \cdot \bv$ underlies a variation of Hodge structure whose Hodge numbers are $(1,1,1)$,
\item[Case $2$:] Some Galois conjugate $\sigma \cdot \bv$ underlies a variation of Hodge structure whose Hodge numbers are $(1,1,1)$, but it \emph{does not} have Zariski-dense monodromy in $\sl_3(\bc)$,
\item[Case $3$:] Some Galois conjugate $\sigma \cdot \bv$ underlies a variation of Hodge structure whose Hodge numbers are $(1,1,1)$, and it has Zariski-dense monodromy in $\sl_3(\bc)$.
\end{enumerate}
We handle Case $1$ in \autoref{prop: privileged (1,1,1)} and Case $2$ in \autoref{prop:sym is motivic}. The main content of the paper is to handle Case $3$, where we show in \autoref{thm:quasi-priviliged implies rigid} that the local systems satisfying the hypotheses of Case $3$ are cohomologically rigid. Then, applying \autoref{thm:rank 3 rigid} and integrality of $\bv$ tells us that $\bv$ is of geometric origin.

\begin{remark}
Esnault and Groechenig show that privileged local systems are integral \cite[Theorem 2.7]{Esnault_Groechenig}. The same methods also imply that quasi-privileged local systems are integral, so we do not need any integrality hypotheses on \autoref{thm:quasi-privileged implies motivic}.
\end{remark}

\subsection{Structure of the paper}
In \autoref{sec:background}, we briefly give some background on non-abelian Hodge theory and orbicurves. In \autoref{sec:reduction}, following work of Langer-Simpson we reduce to the case of studying quasi-privileged local systems which projectively factor through an orbicurve by showing that the local systems in Case $1$ and Case $2$ are of geometric origin. Then in \autoref{sec:rigidity} we show that these quasi-privileged local systems are cohomologically rigid, and therefore by \autoref{thm:rank 3 rigid} they are of geometric origin. Finally in \autoref{sec:examples} we give an example of a privileged but not rigid local system.

\subsection{Acknowledgements}
We would like to thank H\'el\`ene Esnault for many helpful comments, both for mathematical correctness and exposition. Needless to say, this project would not be possible without her help. We also thank her for discussing the ideas in \cite{Esnault_Groechenig} with us. We would also like to thank the organizers of the workshop "A Sprint on Local Systems" in Wuppertal, Germany. The authors worked on Project H for this workshop (mentored by H\'el\`ene Esnault), which resulted in this paper. The first author thanks Kang Zuo for helpful discussions and suggestions. The second author thanks Daniel Litt for many helpful conversations. Part of this project was completed while the second author was at the Max Planck Institute for Mathematics in Bonn, Germany, and he thanks the MPIM for their hospitality.

\subsection{AI Disclosure}
ChatGPT as well as access to AI models provided to the second author by Surge AI and Google DeepMind were used to check the consistency of arguments, search for references, and improve the quality of exposition. The authors also obtained \autoref{prop:Hom vanishes} in discussion with ChatGPT, where ChatGPT suggested the idea of comparing the kernel $K$ appearing in the argument with the fundamental group of a fiber $F$.

\section{Background on Higgs bundles, local systems, and orbicurves}\label{sec:background}

We collect some facts about non-abelian Hodge theory, as well as separately some facts about orbicurves.

\subsection{Higgs bundles and local systems}
We briefly discuss non-abelian Hodge theory in the non-compact setting. This is the work of many people, including Hitchin, Corlette, and Simpson. For precise definitions, we point the reader to \cite[Section 2 and Section 4]{Landesman_Litt} for a thorough discussion. The version of non-abelian Hodge theory we will use is \autoref{thm:non-abelian hodge theory}, proven by Simpson \cite{Simpson_Harmonic} in the generality of smooth (not necessarily proper) curves, and also for smooth projective varieties in \cite{Simpson}.

\begin{theorem}\label{thm:non-abelian hodge theory}
\begin{enumerate}
\item Let $X$ be a smooth proper curve over the complex numbers and $D$ a reduced effective divisor on $X$ (i.e., a finite collection of points). There is a real-analytic isomorphism of moduli spaces between 
\begin{align*}
M_{\betti}(X\setminus D, r) \cong M_{\higgs}(X,D, r)
\end{align*}
where $M_{\betti}(X\setminus D, r)$ is the coarse moduli space of semi-simple rank $r$ local systems on $X \setminus D$, and $M_{\higgs}(X,D, r)$ is the coarse moduli space of polystable rank $r$ parabolic Higgs bundles on $(X,D)$ of parabolic degree zero. Under this isomorphism, irreducible local systems on $X \setminus D$ correspond to stable parabolic Higgs bundles on $(X,D)$.

The local systems underlying pure polarizable complex variations of Hodge structure correspond to the parabolic Higgs bundles $(E_\star, \theta)$ such that for all $t \in \bg_m$, $(E_\star, t \theta) \cong (E_\star, \theta)$. These Higgs bundles are precisely those which are of the form $(E_\star, \theta) \cong (\oplus E^p_\star, \theta)$ with $\theta(E^p) \subseteq E^{p-1} \otimes \Omega_X^1(\log D)$.

\item When $X$ is a smooth projective variety over the complex numbers, there is a real-analytic isomorphism between $M_{\betti}(X, r)$ and $M_{\higgs}(X, r)$ where $M_{\betti}(X, r)$ is the coarse moduli space of rank $r$ semi-simple local systems on $X$ and $M_{\higgs}(X, r)$ is the coarse moduli space of rank $r$ polystable Higgs bundles on $X$ with vanishing rational Chern classes. Under this isomorphism, irreducible local systems on $X$ correspond to stable Higgs bundles on $X$.

The local systems underlying pure polarizable complex variations of Hodge structure correspond to the Higgs bundles $(E, \theta)$ such that for all $t \in \bg_m$, $(E, t\theta) \cong (E, \theta)$. These Higgs bundles are precisely those which are of the form $(E, \theta) \cong (\oplus E^p, \theta)$ with $\theta(E^p) \subseteq E^{p-1} \otimes \Omega_X^1$.
\end{enumerate}
\begin{proof}
See \cite[Theorem on page 718]{Simpson_Harmonic} for the correspondence in the setting of non-proper curves, and \cite[Theorem 8]{Simpson_Harmonic} for the description of complex variations of Hodge structure. For the same statements for smooth projective varieties, see \cite[Theorem 1 and Lemma 4.1]{Simpson}.
\end{proof}
\end{theorem}

For our purposes, in the setting of non-proper curves, the parabolic structure at the punctures is not important and one can treat these objects as they are in the projective setting.

\subsection{Orbicurves}

We recall the definition of orbicurves as well as the notion of a local system factoring through an orbicurve. We follow the convention of \cite[Section 2 and Section 3]{Langer_Simpson}.

\begin{definition}
A \emph{orbicurve} $C$ is a smooth one-dimensional Deligne-Mumford stack over $\bc$ such that its generic stabilizer group is trivial.
\end{definition}

Let $C$ be a smooth projective curve over the complex numbers, $X$ a smooth projective complex variety, and $f:X \to C$ a fibration (i.e., a surjective morphism of varieties which we assume has connected fibers). Given a closed point $c \in C$, we write $X_c = f^{-1}(c)$. As a divisor in $X$, we may write $X_c = \sum a_i D_i$ and we call the greatest common divisor of the coefficients $a_i$ the multiplicity of the fiber. Let $p_1,\dots, p_k$ be the points at which the multiplicity of the fiber is greater than one. Let $c \in C$ be a point where the multiplicity of the fiber is one. Such a closed point exists as we are over a field of characteristic zero.

\begin{definition}
We define the \emph{orbifold fundamental group} of the fibration $f: X \to C$ to be the group
\begin{align*}
\pi_1^{\orb}(C_f, c) := \pi_1(C \setminus \{p_1,\dots, p_k\}, c) / N
\end{align*}
where $N$ is the normal subgroup of $\pi_1(C \setminus \{p_1,\dots, p_k\}, c)$ generated by $\gamma_1^{n_1}, \dots, \gamma_k^{n_k}$ where $\gamma_i$ is the homotopy class of a loop around the puncture $p_i$ and $n_i$ is the multiplicity of the fiber over $p_i$.
\end{definition}

\begin{remark}
Let $C^{\coarse}$ be the coarse space of the orbicurve $C$. Then, $C$ is isomorphic to the root stack $C^{\coarse}[p_1/n_1, \dots, p_k/n_k]$ and the orbifold fundamental group of $C$ is isomorphic to $\pi_1^{\orb}(C_f)$, where $f: X \to C^{\coarse}$ is a fibration such that the multiplicity of the fibers is $n_i$ at $p_i$. Therefore, when we are in the presence of a fibration $f: X \to C$ we will often write $C$ to also mean $C^{\coarse}[p_1/n_1, \dots, p_k/n_k]$.
\end{remark}

Now, let $X$ be a smooth projective variety and $C$ a curve. Observe that projectivity of $X$ implies that any fibration $f: X \to C$ must be projective.

\begin{definition}
Let $\bv$ be a rank $r$ local system on $X$ and let $x \in X$ be a closed point. We say that $\bv$ \emph{projectively factors through an orbicurve} if there is a curve $C$, a fibration $f: X \to C$, and a $\pgl_r$-representation of $\pi_1^{\orb}(C_f,f(x))$ such that the following diagram commutes
\begin{center}
\begin{tikzcd}
	{\pi_1(X,x)} & {\gl_r(\bc)} \\
	{\pi_1^{\orb}(C_f, f(x))} & {\pgl_r(\bc)}
	\arrow[from=1-1, to=1-2]
	\arrow[from=1-1, to=2-1]
	\arrow[from=1-2, to=2-2]
	\arrow[from=2-1, to=2-2]
\end{tikzcd}.
\end{center}
\end{definition}

The main tool we will use is the homotopy exact sequence.
\begin{theorem}\label{thm:homotopy exact sequence}
Let $f: X \to C$ be a fibration, and let $c \in C \setminus \{p_1,\dots, p_k\}$. Let $X_c = f^{-1}(c)$ and let $x \in X_c$ be a point. If $c \in C$ is chosen generically, or $f$ is proper, then we have an exact sequence
\begin{align*}
\pi_1(X_c, x) \to \pi_1(X,x) \to \pi_1^{\orb}(C_f, c) \to 1.
\end{align*}
\begin{proof}
See \cite[Theorem 2.1]{Langer_Simpson}.
\end{proof}
\end{theorem}

Orbicurves also have a notion of Euler characteristic. Let $C$ be a smooth projective curve of genus $g$, and let $f: X \to C$ be a fibration. Write $p_1,\dots, p_k$ to be the points in $C$ whose fibers have multiplicity $n_1,\dots, n_k$ which are larger than one.
\begin{definition}

The \emph{orbifold Euler characteristic} relative to $f$ is
\begin{align*}
\chi_{\orb}(C_f) &= 2 - 2g - \sum_{i = 1}^k \left (1 - \frac{1}{n_i} \right ).
\end{align*}
\end{definition}

We use Poincar\'e duality later on.
\begin{proposition}\label{prop:Poincare duality}
Suppose $\pi_1^{\orb}(C_f)$ is not virtually abelian and let $\rho: \pi_1^{\orb}(C_f) \to \gl(V)$ be some representation. Then, there is a perfect pairing $H^i(\pi_1^{\orb}(C_f), \rho) \times H^{2 - i}(\pi_1^{\orb}(C_f), \rho^\vee) \to \bc$. 
\begin{proof}
The virtually abelian case is the case where $\chi_{\orb}(C_f) \geq 0$; see \cite[Proposition 5.4 and Proposition 5.5]{Behrend_Noohi}. Therefore, $\chi_{\orb}(C_f) < 0$ since $\pi_1^{\orb}(C_f)$ is not virtually abelian. Therefore, $C$ is aspherical by \cite[Proposition 5.3]{Behrend_Noohi} and \cite[Corollary 5.10]{Behrend_Noohi}. Hence $C$ is very good in the sense of Porti \cite[First paragraph of Section 3]{Porti}. Then, we apply \cite[Proposition 3.4]{Porti}. \cite[Proposition 3.4]{Porti} is stated for $\ad(\rho)$, but also holds for general coefficients (see \cite[First paragraph of Section 3]{Porti}). We use that $\pi_1^{\orb}(C_f)$ is \emph{not} virtually abelian to guarantee that $C$ is aspherical.
\end{proof}
\end{proposition}

\section{Reduction to factoring through an orbicurve}\label{sec:reduction}

We recall some facts about local systems here. Much of the content in this section is spelled out in great detail in \cite{Langer_Simpson}. We first discuss some general properties about quasi-privileged local systems, and then we will restrict to studying quasi-privileged local systems with trivial determinant. The key collection of properties we will use is outlined in \autoref{prop:privileged galois}. 

For a Higgs bundle $(E, \theta)$, we will write $H^i_{\higgs}((E,\theta))$ to mean the hypercohomology of the complex
\begin{align*}
H^i_{\higgs}((E,\theta)) &:= \bh^i (E \xrightarrow{\theta} E \otimes \Omega_X^1[-1] \xrightarrow{\theta} E \otimes \Omega_X^2[-2] \to  \cdots).
\end{align*}
If $(E,\theta)$ corresponds to the local system $\bl$, then $H^i_{\higgs}((E,\theta)) \cong H^i(X, \bl)$.

\begin{proposition}\label{prop:privileged galois}
Let $X$ be a smooth projective variety over the complex numbers, and let $\bv$ be a quasi-privileged local system in $M_{\betti}(X, r, \bl)$. Then, the following properties hold
\begin{enumerate}
\item $\bv$ underlies a pure polarizable complex variation of Hodge structure,
\item $\bv$ is defined over the ring of integers of a number field.
\item for all $\sigma \in \aut(\bc)$, the Galois conjugate $\sigma \cdot \bv$ of $\bv$ is quasi-privileged in $M_{\betti}(X,r, \sigma \cdot \bl)$.
\end{enumerate}
\begin{proof}
For (3), the same proof as \cite[Proposition 2.9]{Esnault_Groechenig} works.

For (1), we use non-abelian Hodge theory \autoref{thm:non-abelian hodge theory}, which says there is a real-analytic isomorphism from $M_{\betti}(X,r, \bl)$ to $M_{\higgs}(X, r, (L,0))$, the moduli space of rank $r$ Higgs bundles whose determinant bundle is isomorphic to the rank one Higgs bundle $(L,0)$ corresponding to the torsion rank one local system $\bl$. Then, $M_{\higgs}(X,r,(L,0))$ admits a $\bg_m$-action given by scaling the Higgs field. The fixed points of this $\bg_m$-action correspond to the complex variations of Hodge structure \cite[Lemma 4.1]{Simpson}. For a Higgs bundle $(E,\theta)$, the $\bg_m$-action induces an isomorphism of cohomology groups $H_{\higgs}^1(X, \End^0((E,\theta))) \cong H_{\higgs}^1(X,\End^0((E,t \theta)))$. Therefore, the $\bg_m$-action preserves the locus of Higgs bundles corresponding to quasi-privileged local systems. But the orbit of the $\bg_m$-action is connected, so the orbit of a Higgs bundle corresponding to a quasi-privileged local system is connected and discrete. This implies that the orbit is a single point, implying that quasi-privileged local systems underlie complex variations of Hodge structure.

Then, for (2) we let $Z_d := \{ \bw \in M_{\betti}(X,r, \bl) : h^1(X, \End^{0}(\bw)) \geq  d \}$. Observe that $M_{\betti}(X,r, \bl)$ is defined over a number field $K$, so $Z_d$ is as well. This is because the defining equations for $M_{\betti}(X,r, \bl)$ have coefficients in $K$, so the Jacobian of these defining equations has coefficients lying in $K$ as well. We call the Jacobian $J(x)$, and we let it be a matrix of size $\ell_1 \times \ell_2$ with $\ell_2$ the dimension of the Betti moduli space. Then, $Z_d$ is simply the locus of points in $M_{\betti}(X,r, \bl)$ for which the $J(x)$ has rank at most $\ell_2 - d$. This is a closed condition, so $Z_d$ is a closed subvariety of $M_{\betti}(X, r, \bl)$ over $K$.

By upper-semicontinuity of the Zariski tangent space, we have that $Z_d$ is closed in $M_{\betti}(X,r, \bl)$ for all $d$. Therefore, $Z_d \setminus Z_{d + 1}$ is locally closed, and consists of the local systems with Zariski tangent space of dimension $d$. Taking $d = \dim H^1(X, \End^{0}(\bv))$, we have that $Z_d \setminus Z_{d + 1}$ is a finite set of points. Since $Z_d$ and $Z_{d+1}$ are defined over number fields, so are all of the points in $Z_d \setminus Z_{d + 1}$. Then, the methods in \cite{Esnault_Groechenig_Integrality} apply without change and imply that the quasi-privileged local systems are actually integral.
\end{proof}
\end{proposition}

\subsection{Weight one $\bz$-variations of Hodge structure}
We recall some results on weight one variations of Hodge structure. By this, we mean the Hodge types are contained in $\{(0,1), (1,0)\}$. Note that in our setting, a rank $3$ irreducible local system underlying a variation of Hodge structure can always be put into weight one unless the Hodge numbers of the variation of Hodge structure are $(1,1,1)$. This is because the Hodge numbers of an irreducible rank $3$ variation of Hodge structure can be $(3)$, $(2,1)$, $(1,2)$, or $(1,1,1)$, and in the first three cases they are weight one (after possibly shifting the Hodge types).

\begin{proposition}\label{prop:weight one vhs}
Let $\bv$ be a local system defined over a number field $K$ on a smooth projective complex variety $X$, and suppose that all Galois conjugates of $\bv$ underlie a pure polarized complex variation of Hodge structure of weight one. Furthermore, suppose that all Galois conjugates of $\bv$ are integral. Then, $\bv$ is of geometric origin. In fact, $\bv$ arises as a summand of the cohomology of an abelian scheme over $X$.
\begin{proof}
Let $\bv$ be such a local system and for any $\sigma \in \gal(K/\bq)$ let $\sigma \cdot \bv$ be the Galois conjugate of $\bv$ by $\sigma$. Consider the local system which we obtain by summing over all embeddings of $K$ into $\bc$:
\begin{align*}
\bl = \bigoplus_{\sigma K \hookrightarrow \bc} \sigma \cdot \bv.
\end{align*}
By construction, $\bl$ is a local system underlying a pure polarized complex variation of Hodge structure of weight one, and furthermore it is defined over $\bz$ because it is fixed under Galois conjugation and all Galois conjugates $\sigma \cdot \bv$ are integral. The fact that $\bl$ is a polarizable weight one integral variation of Hodge structure is due to \cite[Proposition 7.1 and Lemma 7.2]{Langer_Simpson}. Therefore, it is enough to show that $\bl$ is of geometric origin. But this is a standard fact for weight one pure polarized $\bz$-variations of Hodge structure. See \cite[Rappel 4.4.3]{Deligne_Hodge_2}, where they show that $\bl$ comes from an abelian scheme over $X$.
\end{proof}
\end{proposition}

Therefore, combining \autoref{prop:weight one vhs} and \autoref{prop:privileged galois} allows us to conclude the following:
\begin{proposition}\label{prop: privileged (1,1,1)}
Let $\bv$ be a quasi-privileged local system of rank $3$ on a smooth projective complex variety $X$, and none of the Galois conjugates of $\bv$ underlie pure polarized complex variations of Hodge structure with Hodge numbers which are $(1,1,1)$. Then, $\bv$ is of geometric origin.
\end{proposition}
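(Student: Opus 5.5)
The plan is to verify the hypotheses of \autoref{prop:weight one vhs} using \autoref{prop:privileged galois}. Let $\bv$ be quasi-privileged of rank $3$ with determinant $\bl$, where $\bl$ is torsion. By \autoref{prop:privileged galois}(2), $\bv$ is defined over the ring of integers $\mathcal{O}_K$ of some number field $K$. We may enlarge $K$ to be Galois over $\bq$. Every Galois conjugate $\sigma\cdot\bv$ for $\sigma \in \aut(\bc)$ then factors through an embedding of $K$, so only finitely many conjugates occur.

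Next, by \autoref{prop:privileged galois}(3), each $\sigma\cdot\bv$ is quasi-privileged in $M_{\betti}(X,3,\sigma\cdot\bl)$. Note that $\sigma\cdot\bl$ is again torsion, and $\sigma\cdot\bv$ is irreducible, since Galois conjugation preserves irreducibility. Applying parts (1) and (2) of \autoref{prop:privileged galois} to $\sigma\cdot\bv$ shows that it underlies a pure polarizable complex VHS and is integral. Integrality is in any case clear directly, because $\sigma$ maps $\mathcal{O}_K$ to $\mathcal{O}_{\sigma K}$.

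It remains to put every conjugate into weight one. Each $\sigma\cdot\bv$ is an irreducible rank $3$ complex VHS. By the discussion preceding \autoref{prop:weight one vhs}, its Hodge numbers are $(3)$, $(2,1)$, $(1,2)$, or $(1,1,1)$. The last case is excluded by hypothesis. In the remaining cases at most two consecutive Hodge types $(p,q),(p+1,q-1)$ occur. Shifting the Hodge filtration, together with a Tate-type twist of the complex VHS, makes the types lie in $\{(1,0),(0,1)\}$ without changing the underlying local system or its polarizability.

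The step needing the most care is this normalization when only one Hodge type occurs, i.e.\ type $(3)$. Here the VHS is unitary. For a complex VHS, "weight one" only needs types contained in $\{(1,0),(0,1)\}$, so placing everything in type $(1,0)$ is allowed. Langer--Simpson \cite[Proposition 7.1 and Lemma 7.2]{Langer_Simpson}, as invoked inside \autoref{prop:weight one vhs}, then handles the sum over conjugates: the direct sum of all conjugates is a $\bz$-VHS of weight one, with each $\sigma\cdot\bv$ and its complex conjugate assigned complementary types.

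With all hypotheses verified, \autoref{prop:weight one vhs} shows that $\bv$ is a summand of the cohomology of an abelian scheme over $X$. Hence $\bv$ is of geometric origin.
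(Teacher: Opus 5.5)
Your proposal is correct and takes the same route as the paper, whose proof is just the combination of \autoref{prop:privileged galois} (applied to every Galois conjugate, giving polarizable complex VHS structures and integrality) with the remark that an irreducible rank $3$ VHS with Hodge numbers other than $(1,1,1)$ can be shifted into weight one, followed by \autoref{prop:weight one vhs}. Your added care about the type $(3)$ normalization and about matching Hodge types on complex-conjugate summands is, as in the paper, delegated to \cite[Proposition 7.1 and Lemma 7.2]{Langer_Simpson}.
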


Therefore, \autoref{prop: privileged (1,1,1)} takes care of Case $1$ outlined in \autoref{subsec:methods}.

\subsection{Hodge type $(1,1,1)$ with non-Zariski-dense monodromy}

\autoref{prop: privileged (1,1,1)} allows us to restrict to the case where $\bv$ is a quasi-privileged local system underlying a pure polarized complex variation of Hodge structure with Hodge numbers $(1,1,1)$. Finally, we leverage the following theorem due to Langer-Simpson.

\begin{theorem}\label{thm: reduction to curve}
Let $\bv$ be a rank $3$ irreducible local system with trivial determinant coming from a pure polarized complex variation of Hodge structure with Hodge numbers $(1,1,1)$. Then, 
\begin{enumerate}
\item if the monodromy group of $\bv$ is Zariski-dense in $\sl_3(\bc)$, the local system projectively factors through an orbicurve,
\item if the monodromy group of $\bv$ is not Zariski-dense in $\sl_3(\bc)$, there is some smooth projective variety $Y$, a finite cover $g: Y \to X$, and a rank $2$ local system $\bw$ on $Y$ such that $g^* \bv = \sym^2 \bw$.
\end{enumerate}
\begin{proof}
This is \cite[Theorem 5.1]{Langer_Simpson} along with the observation that a connected reductive subgroup of $\sl_3$ which acts irreducibly on $\bc^3$ is given by $\sl_3$ or $\so(3)$ (which is isogeneous to $\sl_2$ via the map $\sym^2$). In case (2), Langer-Simpson show that there is some rank $2$ local system $\bw'$ on $Y$ and a rank one local system $\chi$ on $X$ so that $g^* \bv \cong g^* \chi \otimes \sym^2 \bw'$ where $\chi^{\otimes 3} = 1$. But then, taking $\bw = g^* \chi^{-1} \otimes \bw'$ yields $\sym^2 \bw \cong g^* (\chi^{\otimes -2} ) \otimes \sym^2 \bw' \cong g^* \chi \otimes \sym^2 \bw' \cong g^* \bv$.
\end{proof}
\end{theorem}

In the non-Zariski dense case, the finite cover $g: Y \to X$ provided by \autoref{thm: reduction to curve}(2) admits the unit map $\bv \to g_* g^* \bv$. After shrinking to a Zariski-dense open set $U \subseteq X$, $g: g^{-1}(U) \to U$ is \'etale and so $g_* g^* \bv$ is a local system of geometric origin on $U$. Therefore, if $g^* \bv$ is of geometric origin then so is $\bv$. We now show that $g^* \bv$ is of geometric origin. The main ideas in the following argument are in \cite[Corollary 3.5 and Corollary 3.7]{Viehweg_Zuo}, but we include a proof for completeness.

\begin{proposition}\label{prop:sym is motivic}
Suppose $\bv$ is a quasi-privileged local system on a smooth projective variety $X$, and suppose that there is a smooth projective variety $Y$ and a finite cover $g: Y \to X$ such that $g^* \bv \cong \sym^2 \bw$ where $\bw$ is a rank $2$ local system on $Y$. Then, $\bw$ (and therefore $\bv$) is of geometric origin.
\begin{proof}
Let $\bv$ be quasi-privileged on $X$. Then, $\bv$ underlies a pure polarized variation of Hodge structure and is defined over the ring of integers of a number field by \autoref{prop:privileged galois}. Then for all $\sigma \in \aut(\bc)$, we have that $g^* (\sigma \cdot \bv) \cong \sym^2 (\sigma \cdot \bw)$.

Since $\bw$ is a rank $2$ local system, if it (and all of its Galois conjugates $\sigma \cdot \bw$) underlie pure polarized complex variations of Hodge structure and are defined over rings of integers of number fields, then $\bw$ is a summand of the cohomology of an abelian scheme over $Y$ due to \autoref{prop:weight one vhs}. This is because the Hodge numbers of an irreducible rank $2$ variation of Hodge structure are $(2)$ or $(1,1)$ and therefore we may always move the Hodge numbers of $\sigma \cdot \bw$ to lie in $\{(1,0), (0,1)\}$. In the semi-simple case, it will be a direct sum of rank one variations of Hodge structure and therefore we may always shift indices to force both summands to have Hodge numbers lying in $\{(1,0),(0,1)\}$.

To prove that $\bw$ underlies a variation of Hodge structure and is integral, we first establish that $\bw$ is semi-simple. We use semi-simplicity to check both integrality and that it underlies a variation of Hodge structure. If $\bw$ is irreducible, then there is nothing to prove. So we assume that there is a rank one sub-local system $\chi \subseteq \bw$. Then we have an inclusion of local systems $\chi \otimes \bw \hookrightarrow \sym^2 \bw$ via the map $v \otimes w \mapsto (v \otimes w + w \otimes v)/2$. But since $\sym^2 \bw$ is a semi-simple local system, $\chi \otimes \bw$ is as well. This implies that $\bw$ itself is semi-simple.

\textbf{Variation of Hodge Structure:} 
Since $\bw$ is semi-simple, $\bw$ underlies a pure polarized complex variation of Hodge structure if and only if the corresponding Higgs bundle $(E,\theta)$ under non-abelian Hodge theory (\autoref{thm:non-abelian hodge theory}) is $\bg_m$-fixed (see \cite[Corollary 4.2]{Simpson}). Let $\rho: \pi_1(Y) \to \gl_2(\bc)$ be the representation corresponding to $\bw$, and $\rho_t: \pi_1(Y) \to \gl_2(\bc)$ be the representation corresponding to $(E, t \theta)$. We claim that $\rho \cong \rho_t$, implying that $(E,\theta)$ is $\bg_m$-fixed. 

We first note that $\sym^2 \bw$ is a sub-quotient of $\bw \otimes \bw$, and the $\bg_m$-action and the tensor product commute. As a consequence, $(\sym^2 \rho_t) \cong (\sym^2 \rho)_t$. Indeed,
\begin{align*}
(t \cdot (E ,\theta)) \otimes (t \cdot (E,\theta)) &\cong  (E, t\theta) \otimes (E, t\theta) \cong (E \otimes E, (t\theta) \otimes 1 + 1 \otimes (t\theta)) \\
& \cong  (E \otimes E, t(\theta \otimes 1 + 1 \otimes \theta)) \cong  t \cdot (E \otimes E, \theta \otimes 1 + 1 \otimes \theta).
\end{align*}

Now to check that $\rho \cong \rho_t$, we use the following criterion for semi-simple representations: two semi-simple representations $\rho$ and $\rho_t$ are isomorphic if and only if for all $\gamma \in \pi_1(Y)$, $\tr \rho_t(\gamma) = \tr \rho(\gamma)$. To check this, we fix some $\gamma \in \pi_1(Y)$. Then $\sym^2 \rho \cong \sym^2 \rho_t$ since $\sym^2 \bw$ underlies a variation of Hodge structure. Therefore, we have that 
\begin{align*}
\det(\rho_t(\gamma))^3 = \det(\sym^2 \rho_t(\gamma)) = \det(\sym^2 \rho(\gamma)) = \det(\rho(\gamma))^3.
\end{align*}
Therefore $\det(\rho_t(\gamma))$ is a cube-root of $\det(\rho(\gamma))^3$ for all $t \in \bg_m$. But the cube roots of $\det(\rho(\gamma))^3$ form a discrete set and $\bg_m$ is connected, so $\det(\rho_t(\gamma)) = \det (\rho(\gamma))$. Similarly, we have that
\begin{align*}
\tr(\rho_t(\gamma))^2 = \tr(\sym^2 \rho_t(\gamma)) + \det \rho_t(\gamma) = \tr(\sym^2 \rho(\gamma)) + \det \rho(\gamma) = \tr(\rho(\gamma))^2.
\end{align*}
Again, the square-roots of $\tr(\rho(\gamma))^2$ form a discrete set and $\bg_m$ is connected so $\tr \rho_t(\gamma) = \tr \rho(\gamma)$. Therefore, $\rho \cong \rho_t$ which implies that $\bw$ underlies a pure polarizable complex variation of Hodge structure.

\textbf{Integrality:} 
Now let $K$ be the field of definition of $\sym^2 \bw$. To check integrality of $\bw$, we use the following integrality criterion due to Bass \cite[Corollary 2.3 and Corollary 2.5]{Bass}, specialized to our situation: if $\rho: \pi_1(Y) \to \gl_2(\bc)$ is a semi-simple representation, then there is a number field $L$ and a matrix $M \in \gl_2(\bc)$ such that $M \rho M^{-1}$ has image lying in $\gl_2(\mathcal{O}_L)$ if and only if for all $\gamma \in \pi_1(Y)$ we have that $\tr(\rho(\gamma))$ is an algebraic integer. We note that Bass proves it only in the irreducible setting, but the same statement can be extended to the semi-simple case by \cite[Remark 2]{deSmit}.

But checking that $\tr(\rho(\gamma))$ is an algebraic integer is easy. The number $\tr(\rho(\gamma))$ satisfies the relation already used above involving traces, which is a monic polynomial with coefficients lying in $\overline{\bz}$. Therefore, $\tr(\rho(\gamma))$ is indeed an algebraic integer and so Bass's criterion implies that $\rho$ is defined over the ring of integers of some number field $L$.
\end{proof}
\end{proposition}

This completes Case $2$ outlined in \autoref{subsec:methods}.

\section{Cohomological rigidity when factoring through an orbicurve}\label{sec:rigidity}

By \autoref{prop: privileged (1,1,1)}, \autoref{prop:sym is motivic}, and \autoref{thm: reduction to curve} we have reduced the proof of \autoref{thm:quasi-privileged implies motivic} to showing that motivicity holds for rank $3$ quasi-privileged local systems underlying variations of Hodge structure with Hodge numbers $(1,1,1)$, with Zariski-dense monodromy in $\sl_3(\bc)$, and projectively factor through an orbicurve. We show something stronger, namely that in fact they are cohomologically rigid. This will complete Case $3$ of \autoref{subsec:methods}.

\begin{theorem}\label{thm:quasi-priviliged implies rigid}
Let $X$ be a smooth projective variety over $\bc$, and let $\bv$ be an irreducible rank $3$ quasi-privileged local system with trivial determinant underlying a pure polarized variation of Hodge structure with Hodge numbers $(1,1,1)$. Suppose $\bv$ has Zariski-dense monodromy in $\sl_3(\bc)$ (and hence projectively factors through an orbicurve). Then, $\bv$ is cohomologically rigid.
\end{theorem}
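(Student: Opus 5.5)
We prove that $H^1(X,\End^0(\bv)) = 0$. To do this we compare deformations of $\bv$ on $X$ with deformations of the projective representation on the orbicurve, and then use that the locus of deformations with the same $h^1$ must be finite.

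\textbf{Setup.} By \autoref{thm: reduction to curve}(1), fix a fibration $f: X \to C$ and a projective representation $\bar\rho: \pi_1^{\orb}(C_f) \to \pgl_3(\bc)$ such that $\rho: \pi_1(X) \to \sl_3(\bc)$ projectivizes to $\bar\rho\circ f_*$.

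\textbf{Step 1: $\End^0(\bv)$ descends to $C$.} The adjoint representation $\ad\rho$ on $\End^0(\bv)$ factors through $\pgl_3$, so $\End^0(\bv) = f^*\mathbb{A}$, where $\mathbb{A} := \ad\bar\rho$ is an orbifold local system on $C$. Let $K = \ker(\pi_1(X)\to\pi_1^{\orb}(C_f))$, which is the image of $\pi_1(X_c)$ by \autoref{thm:homotopy exact sequence}. The Hochschild--Serre spectral sequence gives
\begin{align*}
0 \to H^1(\pi_1^{\orb}(C_f), \mathbb{A}) \to H^1(X, \End^0(\bv)) \to \hom(K, \mathbb{A})^{\pi_1^{\orb}(C_f)}.
\end{align*}
We show the right-hand term vanishes. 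Since $K$ acts trivially on $\mathbb{A}$, an invariant homomorphism $K\to\mathbb{A}$ factors through $K^{\ab}$. Pulling back along $\pi_1(X_c)\to K$, it comes from $H^1(X_c,\bc)\otimes\mathbb{A}$. Invariance under monodromy means it corresponds to global sections of $R^1f_*\bc\otimes\mathbb{A}$ over the smooth locus of $C$. The local system $R^1f_*\bc$ underlies a weight one variation of Hodge structure, so its summands are of weight one. On the other hand, $\mathbb{A}$ is irreducible (Zariski-dense monodromy) and carries a $(1,1,1)$-induced VHS of weight $0$ with Hodge numbers $(1,2,2,2,1)$. A nonzero map between them would force $\mathbb{A}^\vee$ to be a summand of $R^1f_*\bc$, which is ruled out by comparing Hodge types via rigidity of morphisms of VHS. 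We expect this vanishing, $\hom(K,\mathbb{A})^{\pi_1^{\orb}}=0$, to be the main obstacle. A cleaner route may be to use the comparison of $K$ with $\pi_1(X_c)$ together with semisimplicity and the Hodge-type argument above. Granting it, $H^1(X,\End^0\bv) \cong H^1(\pi_1^{\orb}(C_f), \mathbb{A})$.

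\textbf{Step 2: an Euler characteristic computation on $C$.} The group $\pi_1^{\orb}(C_f)$ is not virtually abelian, since its image is Zariski-dense in $\pgl_3$. Hence \autoref{prop:Poincare duality} applies. Irreducibility of $\mathbb{A}$ gives $H^0 = 0$, and self-duality of $\mathbb{A}$ (via the Killing form) then gives $H^2 = 0$. Therefore
\begin{align*}
\dim H^1 = -\chi(\mathbb{A}) = 8(2g-2) + \sum_i \bigl(8 - \dim \mathbb{A}^{\gamma_i}\bigr).
\end{align*}
The same formula holds for every $\bar\rho'$ in the component of the orbicurve character variety containing $\bar\rho$ with the same local conjugacy classes at the $p_i$. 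Since the $\gamma_i$ have finite order, these classes are locally rigid, so the formula is constant along that component.

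\textbf{Step 3: conclude finiteness forces rigidity.} Suppose $h^1 > 0$. The character variety of $\pi_1^{\orb}(C_f)$ with fixed finite-order local classes is smooth at irreducible points of dimension $-\chi(\mathbb{A})$. So $\bar\rho$ moves in a positive-dimensional family $\bar\rho_s$. Each $\bar\rho_s\circ f_*$ lifts to $\sl_3$: the obstruction is a class in $H^2(\pi_1(X),\mu_3)$, which is discrete and therefore constant in connected families, and it vanishes at $s=0$. Choosing lifts with trivial determinant gives infinitely many pairwise non-isomorphic irreducible $\bw_s \in M_{\betti}(X,3,\mathcal{O})$, all with $\End^0(\bw_s)=f^*\ad\bar\rho_s$. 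For generic $s$ the monodromy stays Zariski-dense, so Steps 1 and 2 apply to $\bw_s$ as well. Then $h^1(\End^0\bw_s) = h^1(\End^0\bv)$ for infinitely many $s$, contradicting \autoref{def:quasi-privileged}. Hence $H^1(X,\End^0(\bv))=0$, which means $\bv$ is a reduced isolated point, i.e. cohomologically rigid.
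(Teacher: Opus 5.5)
Your sketch of Step 1 is essentially the paper's \autoref{prop:Hom vanishes}: pull back along $\pi_1(F)\twoheadrightarrow K$, check equivariance, and compare the five Hodge pieces $(1,2,2,2,1)$ of $\End^0(\bv)$ with the at most two pieces of any summand of $R^1f_*\bc$. The Euler characteristic count in Step 2 is a correct, explicit substitute for Porti's smoothness statement at good points.

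The gap is in Step 3, in the sentence ``Steps 1 and 2 apply to $\bw_s$ as well.'' Step 2 transfers, but Step 1 does not. You deduced $\hom(K,\ad\bar\rho)^{\pi_1^{\orb}(C_f)}=0$ from the Hodge numbers of $\End^0(\bv)$, and these exist only because $\bv$ itself is a $(1,1,1)$-variation. The deformations $\bw_s$ need not underlie variations of Hodge structure at all: variations of Hodge structure are the $\bg_m$-fixed points, and these do not fill out a neighbourhood of $\bar\rho$. Zariski density alone does not keep $\ad\bar\rho_s$ out of $R^1f_*\bc$. For instance, a unitary Zariski-dense $\bw_s$ would give $\End^0(\bw_s)$ a one-piece Hodge structure, and then the length comparison says nothing. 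So as written you have no argument for $h^1(X,\End^0\bw_s)=h^1(X,\End^0\bv)$, which is exactly the equality needed to contradict quasi-privilegedness.

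The repair is short and needs no Hodge theory at $\bw_s$.
\begin{enumerate}
\item Inflation $H^1(\pi_1^{\orb}(C_f),\ad\bar\rho_s)\hookrightarrow H^1(X,\End^0\bw_s)$ is injective for every $s$. So Step 2 gives $h^1(X,\End^0\bw_s)\ge-\chi(\mathbb{A})=h^1(X,\End^0\bv)$.
\item Upper semicontinuity of $h^1$ along the family (with $h^0=0$ near $s=0$) gives the reverse inequality for $s$ near $0$.
\end{enumerate}
Equivalently, $\dim\hom_{\pi_1(C^\circ)}(\ad\bar\rho_s,R^1f_*\bc)$ is upper semicontinuous in $s$ and vanishes at $s=0$, so Step 1's conclusion persists nearby. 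With this fix your route is a valid alternative.

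The paper avoids the issue differently. It uses $H^1(X,\End^0\bv)\cong H^1(\pi_1^{\orb}(C_f),\ad\bar\rho)$ only at $\bv$. It combines this with smoothness of the orbicurve character variety at $\bar\rho$ and the closed embedding $M_{\betti}(C_f^{\orb},\pgl_3)\hookrightarrow M_{\betti}(X,\pgl_3)$ to squeeze $\dim U\le\dim Z\le\dim T_{\bp(\bv)}M_{\betti}(X,\pgl_3)=\dim U$. Thus $\bp(\bv)$ is a smooth point of $M_{\betti}(X,\pgl_3)$ and the tangent dimension is locally constant there, so the paper never has to analyse $H^1$ at the deformed points.
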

Then, \autoref{thm:quasi-priviliged implies rigid} implies \autoref{thm:quasi-privileged implies motivic} by applying Langer-Simpson's results on rigid rank $3$ local systems (\cite[Theorem 1.3]{Langer_Simpson}) along with the integrality results of Esnault and Groechenig (\cite[Theorem 1.1]{Esnault_Groechenig_Integrality}).

The content of this section is to prove \autoref{thm:quasi-priviliged implies rigid}. We first prove some helpful lemmas along the way.

\begin{remark}
In fact, we can already conclude \autoref{thm:quasi-privileged implies motivic} if we are working with rank $3$ privileged local systems on smooth projective varieties with trivial determinant. This is because in this case, a privileged local system is necessarily self-dual. And a rank $3$ self-dual local system has monodromy group contained in $\mathrm{SO}_3$ which is isogenous to $\sl_2$.
\end{remark}

\begin{lemma}\label{lem:weight one}
Let $S$ be a smooth proper curve over the complex numbers and let $D$ be a reduced effective divisor. Let $f :X \to S \setminus D$ be a smooth proper morphism. Let $(E_\star, \theta)$ be the graded parabolic Higgs bundle corresponding to the local system $R^1 f_* \bc$. Then, every sub-Higgs bundle of $(E_\star, \theta)$ of parabolic degree zero has Higgs field which is nilpotent of order at most two, and every sub-Higgs bundle of $(E_\star, \theta)$ has at most two graded pieces.
\begin{proof}
Since $R^1 f_* \bc$ is semi-simple (see \cite[Th\'eor\`eme 4.2.6]{Deligne_Hodge_2}, it suffices to prove this statement for the entire Higgs bundle $(E_\star,\theta)$. The local system $R^1 f_* \bc$ underlies the variation of Hodge structure $(V, \nabla, F^\bullet ) = (R^1 f_* \bc \otimes \mathscr{O}_{S \setminus D}, \id \otimes d, F^\bullet)$ where the Hodge filtration $F^\bullet$ is given by
\begin{align*}
F^0 &= R^1 f_* \bc \otimes \mathscr{O}_{S \setminus D}\\
F^1 &= R^0 f_* \Omega^1_{X/(S \setminus D)}\\
F^2 &= 0
\end{align*}
and the parabolic Higgs bundle $(E_\star, \theta)$ is given by the associated gradeds of the Deligne canonical extension of $(V,\nabla)$ over $S \setminus D$ to $(\overline{V}, \overline{\nabla})$ over $S$ (see \cite[Remarques 5.5(i)]{Deligne_canonical_extension}) with respect to the canonical Griffiths transverse filtration $\overline{F^\bullet}$ on $(\overline{V}, \overline{\nabla})$ extending the Hodge filtration $F^\bullet$ on $(V,\nabla)$ (see \cite[Section 7]{Brunebarbe}).

That is, the parabolic Higgs bundle has underlying bundle 
\begin{align*}
E \cong \overline{F^1}/\overline{F^2} \oplus \overline{F^0}/\overline{F^1}
\end{align*}
and the Higgs field is given by 
\begin{align*}
\theta:  \overline{F^1}/\overline{F^2} \to  \overline{F^0}/\overline{F^1} \otimes \Omega_S^1(\log D).
\end{align*}
In other words, the graded Higgs bundle $(E_\star, \theta)$ has two graded pieces, and the nilpotence order of $\theta$ is bounded above by the number of graded pieces.
\end{proof}
\end{lemma}

Let $\bv$ be an irreducible rank $3$ local system underlying a variation of Hodge structure on $X$, and suppose it (projectively) factors through an orbicurve by a fibration $f : X \to C$. Let $\bp(\bv)$ be the projectivization of $\bv$, and $\rho: \pi_1^{\orb}(C_f) \to \pgl_3(\bc)$ the corresponding group homomorphism obtained from $\bv$ projectively factoring through $C$. We claim that if $\bv$ has Zariski-dense monodromy in $\sl_3(\bc)$, then $\pi_1^{\orb}(C_f)$ \emph{cannot} be virtually abelian.

Suppose $\pi_1^{\orb}(C_f)$ were virtually abelian. Then by definition, there is some finite index subgroup $\Gamma \subseteq \pi_1^{\orb}(C_f)$ such that $\Gamma$ is abelian. Since $\bv$ had Zariski-dense monodromy and $\gl_3(\bc) \to \pgl_3(\bc)$ is a quotient by $\bg_m$, $\rho$ also has Zariski-dense image in $\pgl_3(\bc)$. Then, restricting $\rho$ to $\Gamma$ also has Zariski-dense image. To see this, let $H \subseteq \pgl_3(\bc)$ be the Zariski-closure of $\rho(\Gamma)$. We pick coset representatives $\gamma_1,\dots, \gamma_n$ for $\Gamma$ so that $\gamma_1 \Gamma \cup \dots \cup \gamma_n \Gamma = \pi_1^{\orb}(C_f)$. Then, since the image of $\rho$ is Zariski-dense,
\begin{align*}
\rho(\gamma_1) H \cup \dots \cup \rho(\gamma_n) H = \pgl_3(\bc).
\end{align*}
Since $\pgl_3(\bc)$ is irreducible (as a scheme), there is some $\rho(\gamma_i) H$ which is all of $\pgl_3(\bc)$. This implies that $\rho(\Gamma)$ is Zariski-dense in $\pgl_3(\bc)$. Finally since $\Gamma$ is abelian, $\rho(\Gamma)$ is abelian. But the Zariski closure of an abelian group is still abelian, which would imply that $\pgl_3(\bc)$ is abelian. This is not the case, so we have a contradiction. Therefore, $\pi_1^{\orb}(C_f)$ is not virtually abelian.

Now, recall that the Lie algebras of $\sl_3(\bc)$ and $\pgl_3(\bc)$ are isomorphic. Then, we have a commutative diagram of group homomorphisms
\begin{center}
\begin{tikzcd}
	{\pi_1(X)} & {\sl_3(\bc)} & \\
	{\pi_1^{\orb}(C_f)} & {\pgl_3(\bc)} & {\gl(\mathfrak{sl}_3(\bc))}
	\arrow[from=1-1, to=1-2]
	\arrow["f"', from=1-1, to=2-1]
	\arrow[from=1-2, to=2-2]
	\arrow["{\ad_{\sl_3} = \End^0}", from=1-2, to=2-3]
	\arrow["\rho"', from=2-1, to=2-2]
	\arrow["{\ad_{\pgl_3}}"', from=2-2, to=2-3]
\end{tikzcd}
\end{center}
where $\ad_{\sl_3}$ and $\ad_{\pgl_3}$ are the adjoint representations of $\sl_3(\bc)$ and $\pgl_3(\bc)$, respectively. By the commutativity of the diagram, we see that pulling back $\ad_{\pgl_3} ( \rho)$ along the map $f$ gives us a local system of $\bc$-vector spaces on $X$ which is isomorphic to $\End^{0}(\bv)$.

\subsection{The argument of the main theorem via the homotopy exact sequence}
We consider the exact sequence
\begin{align*}
1 \to K \to \pi_1(X) \to \pi_1^{\orb}(C_f) \to 1
\end{align*}
where $K$ is the kernel of $\pi_1(X) \to \pi_1^{\orb}(C_f)$. Observe that $K$ acts trivially on $\End^0(\bv)$.

\begin{remark}\label{rem:conjugation action}
There is an action of $\pi_1^{\orb}(C_f)$ on $H^1(K, \bc)$ given by "conjugation" which we describe. This is standard, but we give an explicit description here as we will make use of this action later on. Given a group $G$, we will write $[g]$ to mean the equivalence class of $g$ in the abelianization $G^{\ab} = G/[G,G]$.

Given some $\gamma \in \pi_1^{\orb}(C_f)$, we pick a lift $\widetilde{\gamma}$. Note that all other lifts of $\gamma$ are given by $u \cdot \widetilde{\gamma}$ for some $u \in K$. Now, let $[k] \in K^{\ab}$ and $k \in K$ a representative of $[k]$. We define the action of $\gamma$ on $[k]$ to be given by $[\widetilde{\gamma} k \widetilde{\gamma}^{-1}]$. We claim that this is a well-defined action. First, we observe that if we pick another lift $u \cdot \widetilde{\gamma}$ for some $u \in K$, we have that
\begin{align*}
[u \cdot \widetilde{\gamma} k (u \cdot \widetilde{\gamma})^{-1}] = [u \cdot \widetilde{\gamma} k \widetilde{\gamma}^{-1} \cdot u^{-1}] = [\widetilde{\gamma} k \widetilde{\gamma}^{-1}]
\end{align*}
where the last equality uses the fact that both $u$ and $\widetilde{\gamma} k \widetilde{\gamma}^{-1} \in K$, so in $K^{\ab}$ they commute. To check that the action is independent of the choice of representative of $[k]$, we let $k'$ be another representative of $[k]$. Then, $k^{-1} k'$ is a finite product of commutators in $K$. It is therefore enough to prove that every element of the form $aba^{-1}b^{-1}$, for some $a,b \in K$, is sent to $[K,K]$. One can check that 
\begin{align*}
[\widetilde{\gamma} (aba^{-1}b^{-1})\widetilde{\gamma}^{-1} ] = [\widetilde{\gamma} a \widetilde{\gamma}^{-1} \widetilde{\gamma} b \widetilde{\gamma}^{-1} (\widetilde{\gamma} a \widetilde{\gamma}^{-1})^{-1} (\widetilde{\gamma} b \widetilde{\gamma}^{-1})^{-1}].
\end{align*}
Again using that $\widetilde{\gamma} a \widetilde{\gamma}^{-1}$ and $\widetilde{\gamma} b \widetilde{\gamma}^{-1}$ are elements of $K$, we conclude that in $K^{\ab}$ the action does not depend on the choice of representatives. We obtain a well-defined action of $\pi_1^{\orb}(C_f)$ on $H^1(K, \bc)$.
\end{remark}

The Hochschild-Serre spectral sequence for $\End^{0}(\bv)$ yields a five-term exact sequence
\begin{align*}
0 \to H^1( \pi_1^{\orb}(C_f) , \ad_{\pgl_3} ( \rho)) \to H^1(X, \End^{0}(\bv)) \to H^1(K, \End^{0}(\bv))^{\pi_1^{\orb}(C_f)} \to H^2( \pi_1^{\orb}(C_f) ,\ad_{\pgl_3} ( \rho )) \to \dots
\end{align*}

Now, we note that $H^0(\pi_1^{\orb}(C_f) , \ad_{\pgl_3}(\rho))$ gives us the $\pi_1^{\orb}(C_f)$-invariant vectors in $\mathfrak{sl}_3(\bc)$ under the action of  $\ad_{\pgl_3}(\rho)$. But since $\rho$ is Zariski-dense and the center of $\pgl_3(\bc)$ is trivial, $H^0(\pi_1^{\orb}(C_f) , \ad_{\pgl_3}(\rho)) = 0$. Then by orbifold Poincar\'e duality (\autoref{prop:Poincare duality}) and self-duality of $\ad_{\pgl_3}(\rho)$, we have that $H^0(\pi_1^{\orb}(C_f) , \ad_{\pgl_3}(\rho))^\vee = H^2(\pi_1^{\orb}(C_f) , \ad_{\pgl_3}(\rho)) = 0$.

Therefore, we obtain the short exact sequence
\begin{align*}
0 \to H^1( \pi_1^{\orb}(C_f) , \ad_{\pgl_3} ( \rho)) \to H^1(X, \End^{0}(\bv)) \to H^1(K, \End^{0}(\bv))^{\pi_1^{\orb}(C_f)} \to 0.
\end{align*}

\begin{proposition}\label{prop:Hom vanishes}
In the above setting, $H^1( \pi_1^{\orb}(C_f) , \ad_{\pgl_3} ( \rho)) \cong H^1(X, \End^{0}(\bv))$.
\begin{proof}
Showing that $H^1( \pi_1^{\orb}(C_f) , \ad_{\pgl_3} ( \rho)) \cong H^1(X, \End^{0}(\bv))$ is the same as showing 
\begin{align*}
H^1(K, \End^{0}(\bv))^{\pi_1^{\orb}(C_f)} = 0.
\end{align*}
Since $K$ acts trivially on $\End^{0}(\bv)$, we have that
\begin{align*}
H^1(K, \End^{0}(\bv))^{\pi_1^{\orb}(C_f)} \cong \hom_{\pi_1^{\orb}(C_f)}(\End^{0}(\bv), H^1(K, \bc)).
\end{align*}

Now, write $C^\circ = C \setminus \{p_1,\dots, p_k, s_1,\dots, s_n\}$ where $p_1,\dots, p_k$ are the orbifold points in $C$ and $s_1,\dots, s_n$ are the points in $C$ with singular fibers. Note that $\{s_1,\dots, s_n\}$ and $\{p_1,\dots, p_k\}$ may have elements in common. Since $\pi_1^{\orb}(C_f)$ is a quotient of $\pi_1(C^\circ)$, we have a surjective map $\pi_1(C^\circ) \twoheadrightarrow \pi_1^{\orb}(C_f)$.

We pull everything back to $\pi_1(C^\circ)$, and surjectivity of the map $\pi_1(C^\circ) \twoheadrightarrow \pi_1^{\orb}(C_f)$ tells us that it is enough to prove that
\begin{align*}
\hom_{\pi_1(C^\circ)}(\End^{0}(\bv), H^1(K, \bc)) = 0.
\end{align*}

Now, let us restrict our fibration to $C^\circ$. Because we have removed the orbifold points and singular fibers, we have a smooth proper morphism $f: f^{-1}(C^\circ) \to C^\circ$. Let $F$ be a fiber of this fibration.

Then, the homotopy exact sequence yields an exact sequence
\begin{align*}
1 \to \pi_1(F) \to \pi_1(f^{-1}(C^\circ)) \to \pi_1(C^\circ) \to 1.
\end{align*}
Moreover, there is a surjective map $\pi_1(F) \to K$ by the orbifold homotopy exact sequence \autoref{thm:homotopy exact sequence}. Then, this gives us an injective morphism of complex vector spaces
\begin{align*}
H^1(K, \bc) = \hom(K, \bc) \hookrightarrow \hom(\pi_1(F), \bc) = H^1(F, \bc).
\end{align*}
The action of $\pi_1(C^\circ)$ on $H^1(K, \bc)$ factors through $\pi_1^{\orb}(C_f)$ and is described in \autoref{rem:conjugation action}. We claim that the group homomorphism
\begin{align*}
s:\pi_1(F)^{\ab} \to K^{\ab}
\end{align*}
is $\pi_1(C^\circ)$-equivariant. We have a commutative diagram
\begin{center}
\begin{tikzcd}
	1 & {\pi_1(F)} & {\pi_1(f^{-1}(C^\circ))} & {\pi_1(C^\circ)} & 1 \\
	1 & K & {\pi_1(X)} & {\pi_1^{\orb}(C_f)} & 1
	\arrow[from=1-1, to=1-2]
	\arrow[from=1-2, to=1-3]
	\arrow["q"', from=1-2, to=2-2]
	\arrow[from=1-3, to=1-4]
	\arrow["i", from=1-3, to=2-3]
	\arrow[from=1-4, to=1-5]
	\arrow["j", from=1-4, to=2-4]
	\arrow[from=2-1, to=2-2]
	\arrow[from=2-2, to=2-3]
	\arrow[from=2-3, to=2-4]
	\arrow[from=2-4, to=2-5]
\end{tikzcd}
\end{center}
For a group $G$, we will write $[g]$ to mean the equivalence class of $g \in G$ in the quotient $G^{\ab}$. Then given some $[\delta] \in \pi_1(F)^{\ab}$ with coset representative $\delta$ and $\gamma \in \pi_1(C^\circ)$ we have that
\begin{align*}
s(\gamma \cdot [\delta]) = s([\widetilde{\gamma} \delta \widetilde{\gamma}^{-1}]) = [q(\widetilde{\gamma} \delta \widetilde{\gamma}^{-1})] = [i(\widetilde{\gamma}) q(\delta) i(\widetilde{\gamma})^{-1}] = j(\gamma) \cdot [q(\delta)] = \gamma \cdot s([\delta]).
\end{align*}
Therefore, the map $s$ is indeed $\pi_1(C^\circ)$-equivariant. As a consequence, we have an injective morphism of $\pi_1(C^\circ)$-representations $H^1(K,\bc) \hookrightarrow H^1(F, \bc)$.

Therefore, it suffices to prove that 
\begin{align*}
\hom_{\pi_1(C^\circ)}(\End^{0}(\bv), H^1(F, \bc)) = 0.
\end{align*}

First, we observe that since $\bv$ is a local system with Zariski-dense monodromy and the Lie algebra of $\pgl_3$ is the simple Lie algebra $\mathfrak{sl}_3$, the adjoint local system $\End^{0}(\bv)$ is an irreducible local system. Furthermore, Deligne proved that local systems of geometric origin are semi-simple \cite[Th\'eor\`eme 4.2.6]{Deligne_Hodge_2} (and indeed, complex variations of Hodge structure in general are semi-simple). Therefore, it is enough to prove that no summand of $R^1 f_* \bc$ is isomorphic to $\End^{0}(\bv)$. We note that the restriction of $\End^0(\bv)$ to $f^{-1}(C^\circ)$ underlies a variation of Hodge structure, so $\End^0(\bv)$ gives us a variation of Hodge structure on $C^\circ$ as we can check if a local system underlies a variation of Hodge structure after pulling back along a surjection of fundamental groups (see \cite[Corollary 4.3]{Simpson} for the projective case, or \cite[Theorem 25.30]{Mochizuki} in general).

Now, let $\overline{C^\circ}$ be the compactification of $C^\circ$ with simple normal crossings divisor $D$. Since $C^\circ$ is a smooth curve, $D$ is just a collection of points. By \autoref{lem:weight one}, every summand of $R^1 f_* \bc$ corresponds to a parabolic Higgs bundle on the curve-divisor pair $(\overline{C^{\circ}},D)$ with Higgs field of nilpotence order at most two. In other words, every summand of $R^1 f_* \bc$ corresponds to a Higgs bundle with at most two graded pieces.

The local system $\bv$ on $X$ corresponds to a Higgs bundle $(E,\theta)$. Recall that $\bv$ underlies a complex variation of Hodge structure, and Hodge numbers of $\bv$ are $(1,1,1)$. This tells us that $(E, \theta)$ is a system of Hodge bundles of the form (up to a shift of indices)
\begin{align*}
E \cong E^3 \oplus E^2 \oplus E^1
\end{align*}
with Higgs field satisfying $\theta(E^p) \subseteq E^{p - 1} \otimes \Omega^1_{X}$. Now, the Higgs bundle $(T_\star, \omega)$ corresponding to $\End^{0}(\bv)$ on $C^\circ$ has $5$ graded pieces because $\End^0(\bv)$ on $X$ corresponds to a Higgs bundle $(S, \eta)$ with $5$ graded pieces
\begin{align*}
S^{2} &= \shom(E^1, E^3)\\
S^{1} &= \shom(E^1, E^2) \oplus \shom (E^2, E^3)\\
S^0 &= (\shom(E^1, E^1) \oplus \shom(E^2, E^2) \oplus \shom(E^3, E^3) )/\mathscr{O}_{X}\\
S^{-1} &= \shom(E^2, E^1) \oplus \shom (E^3, E^2)\\
S^{-2} &= \shom(E^3, E^1)
\end{align*}
and the Higgs field $\eta$ is given by $\theta^\vee \otimes 1 + 1 \otimes \theta$, where $\theta^\vee$ is the Higgs field on the Higgs bundle corresponding to $\bv^\vee$ on $X$. Therefore, the Higgs bundle $(T_\star, \omega)$ on $C^\circ$ has $5$ graded pieces as well because we can check the number of graded pieces of $(T_\star, \omega)$ by pulling back along the map $f: f^{-1}(C^\circ) \to C^\circ$ and checking the number of pieces in the Hodge filtration of the corresponding variation of Hodge structure on $f^{-1}(C^\circ)$ (which is simply the pullback of $\End^0(\bv)$ from $X$ to $f^{-1}(C^\circ)$).

Since the irreducible local system $\End^{0}(\bv)$ on $C^\circ$ corresponds to a Higgs bundle $(T_\star, \omega)$ with five graded pieces and every summand of $R^1 f_* \bc$ corresponds to a Higgs bundle with at most two graded pieces, $\End^{0}(\bv)$ is not a summand of $R^1 f_* \bc$ as desired.
\end{proof}
\end{proposition}

We have now established that $H^1(\pi_1^{\orb}(C_f), \ad_{\pgl_3}(\rho)) \cong H^1(X, \End^{0}(\bv))$. This allows us to conclude \autoref{thm:quasi-priviliged implies rigid}.

\begin{proof}[Proof of \autoref{thm:quasi-priviliged implies rigid}]
Throughout, we write $M_{\betti}(C_f^{\orb}, \pgl_3)$ to mean the Betti moduli space of projective representations $\pi_1^{\orb}(C_f) \to \pgl_3(\bc)$. Note that a \emph{good} representation $\varphi:\pi_1^{\orb}(C_f) \to \pgl_3(\bc)$ (in the sense of \cite[top of page 2]{Porti}) is a smooth point of $M_{\betti}(C_f^{\orb}, \pgl_3)$ with tangent space isomorphic to $H^1(\pi_1^{\orb}(C_f), \ad_{\pgl_3}(\varphi))$ (see \cite[Theorem 1.3]{Porti}). The representation $\rho: \pi_1^{\orb}(C_f) \to \pgl_3(\bc)$ realizing the projective factorization of $\bv$ is a good representation due to the image of $\rho$ being Zariski-dense in $\pgl_3(\bc)$ and the center of $\pgl_3(\bc)$ being trivial.

Now, we note that since $\pi_1(X) \to \pi_1^{\orb}(C_f)$ is a surjective group homomorphism, the induced map between the Betti moduli spaces of $\pgl_3$-local systems $M_{\betti}(C_f^{\orb}, \pgl_3) \to M_{\betti}(X, \pgl_3)$ is a closed embedding. We claim that this tells us $\bv$ is rigid. 

If $H^1(X, \End^{0}(\bv))$ is zero, then $\bv$ is in fact cohomologically rigid. So, we assume that $H^1(X, \End^{0}(\bv))$ is nonzero. This implies that $H^1( \pi_1^{\orb}(C_f),\ad_{\pgl_3} ( \rho))$ is nonzero. Smoothness of $M_{\betti}(C_f^{\orb}, \pgl_3)$ at $\rho$ allows us to pick some connected open set $U \subseteq M_{\betti}(C_f^{\orb}, \pgl_3)$ containing $\rho$ whose points are smooth.

The image of this open set under the closed embedding $M_{\betti}(C_f^{\orb}, \pgl_3) \to M_{\betti}(X, \pgl_3)$ contains the projectivization $\bp(\bv)$ of $\bv$ by construction. Let $V$ be this image. Then, we have the equalities
\begin{align*}
\dim V = \dim U = \dim H^1(\pi_1^{\orb}(C_f),\ad_{\pgl_3} ( \rho))
\end{align*}
because the map on Betti moduli spaces is a closed embedding. Furthermore, $\bp(\bv)$ is a smooth point of $M_{\betti}(X, \pgl_3)$. To see this, let $Z$ be the irreducible component of $M_{\betti}(X, \pgl_3)$ containing $V$ (and therefore containing $\bp(\bv)$). Then, 
\begin{align*}
 \dim U = \dim V \leq \dim Z \leq \dim H^1(X, \End^0(\bv)) = \dim H^1(\pi_1^{\orb}(C_f),\ad_{\pgl_3} ( \rho)) = \dim U.
\end{align*}
In other words, the dimension of $Z$ is equal to the dimension of the Zariski tangent space at $\bp(\bv)$. Therefore there is an open neighborhood of smooth points of $M_{\betti}(X, \pgl_3)$ containing $\bp(\bv)$, and all of the points in this open neighborhood have their tangent space of the same dimension as that of $\bp(\bv)$.

Finally, note that $\bp(\bv)$ admits a lift to the $\sl_3$-local system $\bv$. For any smooth family $\bp(\bv_t)$ of $\pgl_3$-local systems with $\bp(\bv_0) = \bp(\bv)$, the obstruction class to lifting the family $o(\bp(\bv_t)) \in H^2(X,\mu_3)$ is locally constant. Hence we can (locally) lift the family to a family of $\sl_3$-local systems on $X$ with the same tangent space dimension. This contradicts the assumption that $\bv$ is quasi-privileged, so in fact $\bv$ must be cohomologically rigid.
\end{proof}

We now complete the proof of \autoref{thm:quasi-privileged implies motivic}.

\begin{proof}[Proof of \autoref{thm:quasi-privileged implies motivic}]
Let $X$ be a smooth projective variety over the complex numbers, and let $\bv \in M_{\betti}(X, 3, 1)$ be a rank $3$ quasi-privileged local system with trivial determinant on $X$. Then by \autoref{prop:privileged galois}, $\bv$ and all of its Galois conjugates underlie pure polarizable complex variations of Hodge structure and are defined over rings of integers of number fields. Now, we break the proof up into cases.

Case $1$: If $\bv$ and all of its Galois conjugates underlie pure polarizable complex variations of Hodge structure of weight one, then \autoref{prop: privileged (1,1,1)} allows us to conclude that $\bv$ is of geometric origin.

Case $2$: If $\bv$ (or one of its Galois conjugates) underlies a pure polarizable complex variation of Hodge structure whose Hodge numbers are $(1,1,1)$ and the monodromy is \emph{not} Zariski-dense in $\sl_3(\bc)$, then by \autoref{prop:sym is motivic} we have that $\bv$ is of geometric origin.

Case $3$: If $\bv$ (or one of its Galois conjugates) underlies a pure polarizable complex variation of Hodge structure whose Hodge numbers are $(1,1,1)$ and the monodromy is Zariski-dense in $\sl_3(\bc)$, then $\bv$ (or its Galois conjugate) projectively factors through an orbicurve. Then since $\bv$ is quasi-privileged, \autoref{thm:quasi-priviliged implies rigid} implies that $\bv$ is actually cohomologically rigid. But then, applying the work of Langer-Simpson (\autoref{thm:rank 3 rigid}) tells us that $\bv$ is of geometric origin because $\bv$ is integral.
\end{proof}

\section{Examples}\label{sec:examples}

Privileged local systems need not be rigid, thus a fortiori quasi-privileged local systems need not be rigid. We exhibit an example of a rank 3 privileged  local system on a smooth projective variety that is non-rigid. This example is heavily inspired by \cite{dejong2022rigidnoncohomologicallyrigidlocal}, and is the example mentioned in \autoref{thm:privileged non-rigid}.

\subsection{Notation}
Following notation in \cite[Section 3]{dejong2022rigidnoncohomologicallyrigidlocal}, for $r \geq 2$ we define the standard representation of the symmetric group $S_{r+1}$ \begin{gather*}
    \Lambda_r = \ker(\bz^{r+1} \xrightarrow{\Sigma} \bz), \; \underline{a} = (a_1,\dots,a_{r+1}) \longmapsto \Sigma(\underline{a}) = \sum a_i \\
    \rho_r: S_{r+1} \rightarrow \gl(\Lambda_r).
\end{gather*}

Let $V_4 = \{e,(12)(34),(13)(24),(14)(23)\} \lhd S_4$ be the Klein four group. Then $V_4$ is the unique normal subgroup such that the quotient is isomorphic to $S_3$. We fix the following explicit surjection, given on a set of generators of $S_4$ \begin{align*}
    q: S_4 &\twoheadrightarrow S_3 \\
    (12) &\mapsto (23) \\
    (23) &\mapsto (12) \\
    (34) &\mapsto (23)
\end{align*}
This is well-defined, surjective since $(12),(23)$ are generators, and $V_4$ is its kernel. From now on, when we write $S_4 \twoheadrightarrow S_3$ we will always mean this map. 

\begin{remark}
    The choice of surjection can be made more natural by considering the permutation action of $S_4$ on the set of sets \[
    A_1=\{\{1,2\},\{3,4\}\},A_2=\{\{1,3\},\{2,4\}\},A_3=\{\{1,4\},\{2,3\}\}.
    \]
    Indeed, $\sigma \in S_4$ acts via permutations of $\{A_1,A_2,A_3\}$ and this induces the map above.
\end{remark}

We will now always consider $\Lambda_2$ as an $S_4$-representation, via $S_4 \twoheadrightarrow S_3 \xrightarrow{\rho_2} \gl(\Lambda_2)$. More precisely, the action of the generators $(12),(23),(34) \in S_4$ on $\Lambda_2 = \{(a_1,a_2,a_3) \in \bz^3 \mid \sum_i a_i = 0 \}$, is given by
\begin{align}\label{formula: S_4 to S_3}
    (12)\cdot (a_1,a_2,a_3)  &=(a_1,a_3,a_2) \nonumber \\ 
    (23)\cdot (a_1,a_2,a_3)  &=(a_2,a_1,a_3) \\
    (34)\cdot (a_1,a_2,a_3)  &=(a_1,a_3,a_2) \nonumber
\end{align}

\begin{remark}
    Note that since $S_3$ has no outer automorphisms, for $\alpha \in \aut(S_3)$, and $\rho$ a $S_3$-representation, we have \[\rho \circ \alpha \cong \rho.\]
    Hence the isomorphism class of any representation of $S_4$ via  $S_4 \twoheadrightarrow S_3$ does not depend on the surjection we choose.
\end{remark}

\subsection{General idea}\label{subsec: general idea}
Inspired by the ideas of \cite{dejong2022rigidnoncohomologicallyrigidlocal} we will look at smooth projective varieties $X$ with fundamental groups of the form \[\pi_1(X,x) = B \rtimes S_4,\] where $B \cong \bz^n$ with an $S_4$-action, such that $H^1(B,\bc^*)^{S_4} = 0$. We want to study the local system given by \begin{align*}
    \bv : \pi_1(X,x) = B \rtimes S_4 \twoheadrightarrow S_4 \xrightarrow{\rho_3} \gl(\Lambda_3 \otimes_\bz \bc).
\end{align*}
Our goal is to choose $B$ such that \begin{enumerate}
    \item There exists a smooth non-constant family through $\bv$.
    \item The tangent space dimension differs from all other local systems.
\end{enumerate}
We will construct $B_1$ satisfying (1), $B_2$ satisfying (2), set $B = B_1 \oplus B_2$, and show the sum satisfies both (1) and (2).

\begin{remark}
    More precisely, in the construction below, we will have $B_1 = \Lambda_2 \otimes_\bz H_1(E,\bz)$, $B_2= \Lambda_3 \otimes_\bz H_1(E,\bz)$ where $E$ is an elliptic curve. Here $S_4$ acts trivially on $H_1(E,\bz)$.
\end{remark}

We establish a few general facts first.
Let $\bw$ be an irreducible rank 3 local system on $X$ with determinant $\det(\bv)$. By Clifford theory, the restriction to $B$ decomposes into characters, i.e. \[\bw|_{B} = \chi_1 \oplus \chi_2 \oplus \chi_3.\] By irreducibility of $\bw$, the $S_4$-action induces a transitive action on the set of (isomorphism classes of) characters. We deduce that either all $\chi_i$ are isomorphic or pairwise non-isomorphic. Further, we use the exact sequence \begin{align*}
    0 &\rightarrow H^1(S_4, H^0(B, \End^{0}(\bw)|_B)) \rightarrow H^1(\pi_1(X,x), \End^{0}(\bw)) \rightarrow H^1(B, \End^{0}(\bw)|_B)^{S_4} \\ &\rightarrow H^2(S_4, H^0(B, \End^{0}(\bw)|_B)) \rightarrow \dots.
\end{align*}
The first and last terms are zero since $S_4$ is finite and $H^0(B, \End^{0}(\bw)|_B)$ is some complex $S_4$-representation. We deduce $\dim H^1(\pi_1(X,x), \End^{0}(\bw)) = \dim H^1(B, \End^{0}(\bw)|_B)^{S_4}$. 

\textbf{Case $1$}: Assume all $\chi_i$ are isomorphic, hence $\chi = \chi_i$ is fixed by $S_4$. Then $H^1(B,\bc^*)^{S_4} = 0$ forces the character to be trivial, thus the representation factors through $S_4$. Note that for fixed determinant there is only one isomorphism class of irreducible local systems factoring through $S_4$ of rank 3, namely $\bv$. The tangent space dimension will be \begin{align*}\dim H^1(X, \End^{0}(\bv)) &= \dim \hom_{S_4}(B \otimes_\bz \bc, \End^{0}(\bv)).\end{align*} As seen in \cite[Example 1]{article} using a character table, we have that as an $S_4$-representation, \[\End(\bv) \cong \End(\rho_3) \cong (\mathbf{1} \oplus \Lambda_3 \oplus \Lambda_2 \oplus (\Lambda_3 \otimes_\bz \sgn))\otimes_\bz \bc\] hence
    \[\End^{0}(\bv) \cong \End^{0}(\rho_3) \cong (\Lambda_3 \oplus \Lambda_2 \oplus (\Lambda_3 \otimes_\bz \sgn))\otimes_\bz \bc.\]

\textbf{Case $2$}: Assume the $\chi_i$ are pairwise non-isomorphic. Since the permutation action of $S_4$ is transitive its action either factors through $S_3$ or $C_3$. But $S_4$ has no quotient isomorphic to $C_3$, hence we deduce that $S_4$ acts on $\{\chi_1,\chi_2,\chi_3\}$ via $S_4 \twoheadrightarrow S_3$ after possibly relabeling the characters. After restricting to $B$, we have 
$\End^{0}(\bw)|_B \cong D \oplus \bigoplus_{i \neq j} \chi_i \chi_j^{-1}$ where $D = \{(a_1,a_2,a_3) \in \bc^3 \mid \sum_i a_i = 0\}$ with trivial $B$ action. Since $B \cong \bz^n$ we have that $H^1(B,\chi_i \chi_j^{-1}) = 0$ for $i\neq j$. Thus we get \begin{align*}H^1(B, \End^{0}(\bw)|_B) = H^1(B, D \oplus \bigoplus_{i \neq j} \chi_i \chi_j^{-1}) = H^1(B, D). \end{align*} Finally, the $B$-action on $D$ is trivial, and the $S_4$-action on the diagonal is induced by the permutation action on $\{\chi_1,\chi_2,\chi_3\}$ via $S_4 \twoheadrightarrow S_3$, hence $D$ is precisely isomorphic to $\Lambda_2\otimes_\bz \bc$. We deduce
\begin{align*}
    \dim H^1(X, \End^{0}(\bw)) &= \dim \hom_{S_4}(B \otimes_\bz \bc, \Lambda_2\otimes_\bz \bc)
\end{align*} 

If we want to distinguish the tangent space dimension from Case $1$ to Case $2$, we can pick \[
B_2 = \Lambda_3 \qquad \text{or} \qquad B_2 = \Lambda_3 \otimes \sgn.
\]
If we want our local system to be in a non-constant family, then the tangent space dimension of all local systems in that family must be non-zero. For this to be possible, we need to ensure the tangent space dimensions in Case 2 are non-zero. For this we pick
\[
B_1 = \Lambda_2.
\] 
In the following two sections, we show that these choices indeed give us a privileged but not rigid local system.

\subsection{Realization as a fundamental group}

We first show that the semidirect product above can be realized as a fundamental group of a smooth projective variety. This construction and notation follow \cite[Section 3]{dejong2022rigidnoncohomologicallyrigidlocal}. Let $E$ be an elliptic curve. We define the abelian varieties \begin{align*}
    A_r = E \otimes_\bz \bz^r = E \times \dots \times E
\end{align*}
of dimension $r$. We choose a basis $(b_i = (e_i - e_{r+1}))_i$ of $\Lambda_r$, then we view $\mathrm{Aut}(E) \otimes_\bz  \mathrm{Aut}(\Lambda_r) \subset \mathrm{Aut}(A_r)$, and $S_{r+1}$ acts on $A_r$ via \begin{align*}
    S_{r+1} \xrightarrow{Id \otimes \rho_r} \mathrm{Aut}(E) \otimes_\bz  \gl(\Lambda_r) \subset \mathrm{Aut}(A_r).
\end{align*}

Let $P_r$ be a smooth projective simply connected variety over $\bc$ on which $S_{r+1}$ acts freely, see \cite[Proposition 15]{Serre2003SureLT}. Define \begin{align*}
    Y = A_3 \times_\bc A_2 \times_\bc P_3 \qquad X = Y/S_4,
\end{align*}
where $S_4$ acts diagonally, via $S_{4} \xrightarrow{Id \otimes \rho_3} \mathrm{Aut}(A_3)$ on $A_3$ and via $S_4 \twoheadrightarrow S_3 \xrightarrow{Id \otimes \rho_2} \mathrm{Aut}(A_2)$ on $A_2$. We then have the Galois cover $f: Y \rightarrow X$. We fix $\bc$-points $y = (0,0,p)$ of $Y$ and $x = f(y)$. We then obtain \begin{align*}
    1\rightarrow \pi_1(Y,y) \rightarrow \pi_1(X,x) \rightarrow S_4\rightarrow 1
\end{align*}
This is the same as the homotopy exact sequence of the smooth projective morphism
$f : X \rightarrow P_3 /S_4$ with fibers isomorphic to $A_3 \times_\bc A_2$ and with section $P_3 /S_4 \xrightarrow{\sim} (0_{A_3} \times_\bc 0_{A_2} \times_\bc P_3)/ S_4 \subset X$. We deduce
 \begin{align*}
    \pi_1(X,x) = \pi_1(Y,y) \rtimes S_4 = (H_1(E,\bz) \otimes_\bz (\Lambda_3 \oplus \Lambda_2) )\rtimes S_4
\end{align*}
where $S_4$ acts via $Id\otimes (\rho_3 \oplus \rho_2)$. We define the local system $\bv$ on $X$ given by \begin{align*}
    \pi_1(X,x) \twoheadrightarrow S_4 \xrightarrow{\rho_3} \gl(\Lambda_3 \otimes_\bz \bc).
\end{align*}
The determinant of $\bv$ is given by \begin{align*}
    \det(\bv): \pi_1(X,x) \twoheadrightarrow S_4 \xrightarrow{\sgn} \{\pm 1\}.
\end{align*}

\noindent
Note that in the notation of \autoref{subsec: general idea}, we have \[ \pi_1(X,x) = B \rtimes S_4,\]
where $B = \pi_1(Y,y) = B_1 \oplus B_2$ for $B_1 = H_1(E,\bz) \otimes_\bz \Lambda_2$ and $B_2 = H_1(E,\bz) \otimes_\bz \Lambda_3$.

\subsection{Proof of \autoref{thm:privileged non-rigid}}

The goal of this subsection is to show the following proposition.
\begin{proposition}\label{privileged non rigid}
    The local system $\bv$ is privileged but not rigid in  $M_{\betti}(X,3, \det(\bv))$.
\end{proposition}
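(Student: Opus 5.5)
We prove the two properties separately. Privilegedness reduces to the tangent-space computation already set up in \autoref{subsec: general idea}. Non-rigidity comes from an explicit one-parameter family of induced representations degenerating to $\bv$.

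\textbf{Tangent space dimensions.} Write $B\otimes_\bz\bc = H_1(E,\bc)\otimes_\bc\big((\Lambda_2\oplus\Lambda_3)\otimes_\bz\bc\big)$, where $H_1(E,\bc)\cong\bc^2$ carries the trivial $S_4$-action. Let $\bw$ be any irreducible rank $3$ local system with determinant $\det(\bv)$. We follow the dichotomy of \autoref{subsec: general idea}, first checking that $H^1(B,\bc^*)^{S_4}=0$. Indeed, $H^1(B,\bc^*)=\hom(B,\bc^*)$ has identity component $\hom(B,\bc)$, with $S_4$-invariants $2\dim\hom_{S_4}(\mathbf 1,\Lambda_2\oplus\Lambda_3)=0$. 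For the torsion part we use that $\Lambda_2^{S_3}=\Lambda_3^{S_4}=0$ and that the coinvariants are finite of order prime to the relevant torsion; if this step is delicate, we would argue directly on fixed characters of $\bz^2\otimes\Lambda_r$.
\begin{itemize}
\item In Case $1$ we get $\bw\cong\bv$, and the displayed decomposition of $\End^0(\rho_3)$ gives
\begin{align*}
\dim H^1(X,\End^0(\bv)) = 2\dim\hom_{S_4}\big((\Lambda_2\oplus\Lambda_3)\otimes\bc,\ (\Lambda_3\oplus\Lambda_2\oplus\Lambda_3\otimes\sgn)\otimes\bc\big)=2(1+1)=4.
\end{align*}
\item In Case $2$ we get
\begin{align*}
\dim H^1(X,\End^0(\bw))=2\dim\hom_{S_4}\big((\Lambda_2\oplus\Lambda_3)\otimes\bc,\Lambda_2\otimes\bc\big)=2.
\end{align*}
\end{itemize}
Hence $\bv$ is the unique point of $M_{\betti}(X,3,\det\bv)$ with $h^1=4$, and every other irreducible point has $h^1=2$. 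This gives privilegedness (in particular quasi-privilegedness).

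\textbf{Non-rigidity.} Let $D_4\subset S_4$ be the stabilizer of $A_1$, i.e. the preimage under $q$ of the stabilizer of a point in $\{1,2,3\}$. For a homomorphism $\phi:H_1(E,\bz)\to\bc^*$, define a character $\chi_\phi$ of $B$ that is trivial on $B_2$ and, on $B_1=H_1(E,\bz)\otimes\Lambda_2$, is given by $h\otimes(a_1,a_2,a_3)\mapsto\phi(h)^{a_1}$. This character is $D_4$-invariant, so it extends to $B\rtimes D_4$ by $\chi_\phi\cdot\varepsilon$, where $\varepsilon$ is a fixed character of $D_4$. Set
\begin{align*}
\bw_\phi=\mathrm{Ind}_{B\rtimes D_4}^{B\rtimes S_4}(\chi_\phi\varepsilon).
\end{align*}
We would verify the following properties.
\begin{itemize}
\item At $\phi=1$, $\bw_1=\mathrm{Ind}_{D_4}^{S_4}\varepsilon$. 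For the trivial $\varepsilon$ this is $\mathbf 1\oplus\Lambda_2$. For the two characters of $D_4$ not factoring through $D_4\to S_2$, a character-table check gives $\Lambda_3$ and $\Lambda_3\otimes\sgn$. We choose $\varepsilon$ so that $\bw_1\cong\rho_3=\bv$.
\item The determinant of $\bw_\phi$ is $\det(\mathrm{Ind}\,\varepsilon)$ times the transfer of $\chi_\phi$. The transfer is $\prod_i\phi(h)^{a_i}=1$ because $\sum a_i=0$, so $\det\bw_\phi=\det\bv$ for all $\phi$.
\item For generic $\phi$ (e.g.\ $\phi$ of infinite order), the three conjugates $\phi^{a_1},\phi^{a_2},\phi^{a_3}$-characters are pairwise distinct. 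Mackey's criterion then makes $\bw_\phi$ irreducible, and the restrictions $\bw_\phi|_B$ vary with $\phi$, so the $\bw_\phi$ are pairwise non-isomorphic up to the finite $S_3$-ambiguity.
\end{itemize}
Thus $\phi\mapsto\bw_\phi$, for $\phi\in\hom(H_1(E,\bz),\bc^*)\cong(\bc^*)^2$, is a non-constant algebraic family in $M_{\betti}(X,3,\det\bv)$ through $\bv$, so $\bv$ is not isolated and hence not rigid.

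\textbf{Main obstacles.} The main care points are pinning down the correct character $\varepsilon$ of $D_4$ with $\mathrm{Ind}\,\varepsilon\cong\rho_3$, and the torsion part of $H^1(B,\bc^*)^{S_4}=0$. Both are finite group-theoretic checks, which we would do with explicit generators $(12),(23),(34)$ and formula \eqref{formula: S_4 to S_3}.
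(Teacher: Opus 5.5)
Your proposal follows the paper's proof. The tangent-space dichotomy, giving $h^1=4$ at $\bv$ and $h^1=2$ at every other point, is the same. Your family $\bw_\phi=\mathrm{Ind}_{B\rtimes D_4}^{B\rtimes S_4}(\chi_\phi\varepsilon)$ is exactly the family $\Psi_{s,t}$ of \autoref{lem: smooth family}: take $(s,t)=(\phi(\alpha),\phi(\beta))$ for a basis $\alpha,\beta$ of $H_1(E,\bz)$, and $\varepsilon(\sigma)=\lambda_{\sigma,1}$, the sign by which $D_4$ acts on the line $\bc v_1$ with $v_1=e_1+e_2-e_3-e_4$. This pins down the $\varepsilon$ you flagged as an obstacle. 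You handle this family via Clifford/Mackey theory and the transfer. The paper instead writes monomial matrices, gets irreducibility for every $(s,t)$ by restricting to $S_4$, where the representation is $\rho_3$, and separates points by traces.

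The one step that needs repair is the torsion part of $H^1(B,\bc^*)^{S_4}=0$. Vanishing of invariants says nothing about torsion. Since $\bc^*$ contains all roots of unity, $\hom_{S_4}(B,\bc^*)=\hom(B_{S_4},\bc^*)$ vanishes only if the coinvariants are zero, so "finite of order prime to the relevant torsion" is not an argument. The coinvariants do vanish, as in \autoref{lem: no characters}: for $\sigma=(i\,j)$ and $k\neq i,j$ one has $\sigma(e_k-e_i)-(e_k-e_i)=e_i-e_j$.
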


We obtain \autoref{thm:privileged non-rigid} by twisting with a character to get a determinant one local system.
\begin{proof}[Proof of \autoref{thm:privileged non-rigid}]
    The local system $\bv$ (in \autoref{privileged non rigid}) has determinant \begin{align*}
    \det(\bv): \pi_1(X,x) \twoheadrightarrow S_4 \xrightarrow{\sgn} \{\pm 1\}.
\end{align*} We can twist the representation by $\det(\bv)$, then $\det(\bv \otimes \det(\bv)) = \det(\bv)^4 = 1.$ Twisting by $\det(\bv)$ gives an isomorphism from $M_{\betti}(X,3, \det(\bv))$ to $M_{\betti}(X,3, 1)$, in particular, it preserves irreducibility, tangent space dimension, and non-rigidity. Hence $\bv \otimes \det(\bv)$ is a rank 3 privileged non-rigid determinant 1 local system.
\end{proof}

\begin{lemma}\label{lem: no characters}
    We have $H^1(\pi_1(Y,y),\bc^*)^{S_4} = 0$. 
\end{lemma}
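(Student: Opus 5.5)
We need to show that $H^1(B,\bc^*)^{S_4}$ is trivial, i.e.\ that every $S_4$-invariant character $\chi\colon B\to\bc^*$ is trivial, where $B=\pi_1(Y,y)=H_1(E,\bz)\otimes_\bz(\Lambda_3\oplus\Lambda_2)$. Since $B$ is free abelian of finite rank, $H^1(B,\bc^*)=\hom(B,\bc^*)$ is a complex torus $T=\hom(B,\bz)\otimes_\bz\bc^*$. Because $S_4$ acts trivially on $H_1(E,\bz)\cong\bz^2$, there is an $S_4$-equivariant isomorphism $T\cong(\Lambda_3^\vee\otimes\bc^*)^2\times(\Lambda_2^\vee\otimes\bc^*)^2$. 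So it suffices to show that $(\Lambda_r^\vee\otimes_\bz\bc^*)^{S_{r+1}}$ is trivial for $r=2,3$, with $S_4$ acting on $\Lambda_2$ through $q\colon S_4\twoheadrightarrow S_3$. Surjectivity of $q$ reduces the $r=2$ case to $S_3$-invariants.

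\textbf{Step 1: describe the dual torus concretely.} We have $\Lambda_r^\vee=\bz^{r+1}/\bz(1,\dots,1)$, with $S_{r+1}$ permuting coordinates; this is the dual of the inclusion $\Lambda_r\hookrightarrow\bz^{r+1}$. Since $\bc^*$ is divisible, tensoring the exact sequence $0\to\bz\to\bz^{r+1}\to\Lambda_r^\vee\to0$ with $\bc^*$ stays exact, as the sequence is split over $\bz$. Thus $\Lambda_r^\vee\otimes\bc^*\cong(\bc^*)^{r+1}/\{(t,\dots,t)\}$, with $S_{r+1}$ permuting coordinates.

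\textbf{Step 2: compute invariants.} Let $[z_1,\dots,z_{r+1}]$ be an invariant class. Then for each transposition $(ij)$ there is $t_{ij}\in\bc^*$ with $z_{\sigma(k)}=t_{ij}z_k$ for all $k$.
\begin{itemize}
\item Looking at a coordinate $k$ fixed by $(ij)$, which exists since $r+1\ge3$, gives $t_{ij}=1$.
\item Hence $z_i=z_j$ for all $i,j$, and the class is trivial.
\end{itemize}
Equivalently, $H^1(S_{r+1},\bc^*)$ controls the obstruction, and one checks directly that it vanishes. This covers both factors, including $\Lambda_2$ for $S_3$, where $r+1=3$ still leaves a fixed coordinate for each transposition.

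\textbf{Step 3: conclude.} The invariants of a product of $S_4$-tori are the product of the invariants, so $T^{S_4}$ is trivial, which proves \autoref{lem: no characters}.

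The only real subtlety is Step 1. One must correctly identify $\hom(\Lambda_r,\bc^*)$ as the quotient torus rather than the sub-torus $\{\prod z_i=1\}$. In the sub-torus picture there would be nontrivial invariants: the diagonal roots of unity $(\zeta,\dots,\zeta)$ with $\zeta^{r+1}=1$. So it matters that a character of $\Lambda_r$ is a restriction from $\bz^{r+1}$ modulo the diagonal, which is exactly the dualization carried out in Step 1.
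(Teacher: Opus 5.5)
Your proof is correct, but it runs dual to the paper's. The paper never describes the character torus. It writes $\hom(\Lambda_r,\bc^*)^{S_4}=\hom_{S_4}(\Lambda_r,\bc^*)=\hom((\Lambda_r)_{S_4},\bc^*)$ and shows the coinvariants $(\Lambda_r)_{S_4}$ vanish: for $\sigma=(i\,j)$ and $k\neq i,j$ one has $\sigma(e_k-e_i)-(e_k-e_i)=e_i-e_j$. You instead identify $\hom(\Lambda_r,\bc^*)$ with $T_r=(\bc^*)^{r+1}/\{(t,\dots,t)\}$ and compute its fixed points.

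The combinatorial input is the same in both: a transposition fixes a third index, which needs $r+1\geq 3$. The coinvariant argument is shorter and works on the lattice $\Lambda_r$ itself, so the quotient-torus versus sub-torus pitfall you rightly flag cannot arise there. Your version gives an explicit description of $\hom(B,\bc^*)$ and shows clearly why the analogous statement for $\Lambda_r\otimes_\bz\bc^*$ fails. In Step 1, divisibility of $\bc^*$ plays no role; the splitting alone gives exactness.

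One aside in Step 2 is wrong, though nothing in your argument depends on it. $H^1(S_{r+1},\bc^*)=\hom(S_{r+1},\bc^*)\cong\bz/2$, generated by $\sgn$, so it does not vanish. From $1\to\bc^*\to(\bc^*)^{r+1}\to T_r\to 1$ you get an injection $T_r^{S_{r+1}}\hookrightarrow H^1(S_{r+1},\bc^*)$, $[z]\mapsto(\sigma\mapsto t_\sigma)$. The real point is that $\sgn$ is not in its image, i.e.\ that $t_\sigma=1$ on transpositions. That is exactly what your fixed-coordinate argument proves. It genuinely fails for $r+1=2$, where $[1,-1]$ is a nontrivial invariant. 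Delete or correct that sentence and the proof stands.
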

\begin{proof}
    Since $H^1(\pi_1(Y,y),\bc^*)^{S_4}= H^1(H_1(E,\bz) \otimes_\bz \Lambda_3,\bc^*)^{S_4} \oplus H^1(H_1(E,\bz) \otimes_\bz \Lambda_2,\bc^*)^{S_4}$ it suffices to show vanishing of both summands.
    For $r \in \{2,3\}$, we have \begin{align*}
        H^1(H_1(E,\bz) \otimes_\bz \Lambda_r,\bc^*)^{S_4} = \hom_{S_4}(H_1(E,\bz) \otimes_\bz \Lambda_r, \bc^*) \cong \hom_{S_4}(\Lambda_r^{\oplus 2}, \bc^*).
    \end{align*} 
    Write $(\Lambda_r)_{S_4} = \Lambda_r/\langle \sigma n - n\mid \sigma \in S_4, n \in \Lambda_r \rangle$ for the coinvariants. Then giving an $S_4$-equivariant character of $\Lambda_r$ is the same as giving a character of $(\Lambda_r)_{S_4}$, i.e. \[
    \hom_{S_4}(\Lambda_r, \bc^*) = \hom((\Lambda_r)_{S_4}, \bc^*)
    \]  

    For $r \in \{2,3\}$, we claim $(\Lambda_r)_{S_4} = 0$. To see this, note that $\Lambda_r = \{(a_1,\dots,a_{r+1}) \mid \sum_i a_i = 0\}$ is generated by $e_i - e_j$, $i\neq j \in \{1,\dots,r+1\}$, where $e_i \in \bz^{r +1}$ is $(0,\dots,1,\dots,0)$ with $1$ at the $i$-th position. Choose $k\neq i,j$, then for $\sigma = (i\;j) \in S_{r+1}$ we have $\sigma(e_k - e_i) - (e_k - e_i)= e_i - e_j$. Hence \begin{align*}
        (\Lambda_3)_{S_4} &= \Lambda_3/\langle \sigma n - n\mid \sigma \in S_4, n \in \Lambda_3 \rangle = 0 \\
        (\Lambda_2)_{S_4} &= (\Lambda_2)_{S_3} = \Lambda_2/\langle \sigma n - n\mid \sigma \in S_3, n \in \Lambda_2 \rangle =0
    \end{align*}
    since $S_4$ acts on $\Lambda_2$ via $S_3$.
\end{proof}

\begin{lemma}\label{lem: smooth family}
    There exists a smooth non-constant family \begin{align*}
        \Psi_{s,t}: (H_1(E,\bz) \otimes_\bz \Lambda_2) \rtimes S_4 \rightarrow \gl_3(\bc)
        \end{align*}
        where $(s,t) \in \bg_m \times \bg_m$, of irreducible representations, such that $\Psi_{1,1}$ is isomorphic to the representation defined by \[
        (H_1(E,\bz) \otimes_\bz \Lambda_2) \rtimes S_4 \twoheadrightarrow S_4 \xrightarrow{\rho_3} \gl_3(\Lambda_3 \otimes_\bz \bc)
        \]
        and $\det{\Psi_{s,t}} = \det{\Psi_{1,1}}$.
\end{lemma}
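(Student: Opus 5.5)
The plan is to build $\Psi_{s,t}$ by inducing a one-parameter-per-coordinate family of characters from an index $3$ subgroup, using the fact that $\rho_3$ is itself a monomial (induced) representation. Set $G = (H_1(E,\bz)\otimes_\bz \Lambda_2)\rtimes S_4$ and $B_1 = H_1(E,\bz)\otimes_\bz\Lambda_2$. Let $D\subset S_4$ be the stabilizer of $A_1=\{\{1,2\},\{3,4\}\}$, a dihedral group of order $8$ containing $V_4$. Its image under $q$ is the stabilizer of the first coordinate in $S_3$.

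\textbf{Step 1: $\rho_3$ as an induced representation.} Restricted to $V_4$, $\rho_3\otimes\bc$ is the sum of the three nontrivial characters of $V_4$. These are permuted transitively by $S_4$ through $q$, and the line of one of them, say $\lambda_1$, is stable under $D$. Hence $\rho_3\otimes\bc\cong \mathrm{Ind}_D^{S_4}\psi$, where $\psi$ is the character by which $D$ acts on that line; $\psi$ extends $\lambda_1$. This can be checked with the character table, if needed choosing the correct one of the two extensions of $\lambda_1$ so that we get $\rho_3$ rather than $\rho_3\otimes\sgn$.

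\textbf{Step 2: the characters and the family.} Fix a basis $\gamma_1,\gamma_2$ of $H_1(E,\bz)$. For $(s,t)\in\bg_m\times\bg_m$, define a character of $B_1$ by
\[
\chi_{s,t}(\gamma_1\otimes \underline{a}+\gamma_2\otimes\underline{b}) = s^{a_1}t^{b_1}.
\]
By \eqref{formula: S_4 to S_3}, $D$ acts on $\Lambda_2$ through the transposition fixing the first coordinate, so $\chi_{s,t}$ is $D$-invariant. Therefore $\psi\cdot\chi_{s,t}$, defined by $(b,d)\mapsto \chi_{s,t}(b)\psi(d)$, is a character of $B_1\rtimes D$. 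Set $\Psi_{s,t}=\mathrm{Ind}_{B_1\rtimes D}^{G}(\psi\cdot\chi_{s,t})$. In a fixed basis of coset representatives the matrix entries are Laurent monomials in $s,t$, so this is an algebraic (hence smooth) family. At $(s,t)=(1,1)$ the character is inflated from $D$, so $\Psi_{1,1}$ is the inflation of $\mathrm{Ind}_D^{S_4}\psi\cong\rho_3$.

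\textbf{Step 3: irreducibility, non-constancy and determinant.}
\begin{itemize}
\item \emph{Irreducibility.} By Mackey, $\Psi_{s,t}$ restricted to $B_1\rtimes V_4$ is the sum of the three conjugates of $\psi\chi_{s,t}$. On $V_4$ these restrict to the three distinct characters $\lambda_i$, so they are pairwise non-isomorphic. They are permuted transitively by $G$, so $\Psi_{s,t}$ is irreducible for every $(s,t)$.
\item \emph{Non-constancy.} The restriction to $B_1$ is $\chi_{s,t}\oplus\chi'\oplus\chi''$ with $\chi'(\gamma_1\otimes\underline a)=s^{a_2}$ and $\chi''(\gamma_1\otimes\underline a)=s^{a_3}$. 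Taking $\underline a=(1,-1,0)$, the trace of $\Psi_{s,t}(\gamma_1\otimes\underline a)$ is $s+s^{-1}+1$, which varies with $s$.
\item \emph{Determinant.} The map $(s,t)\mapsto\det\Psi_{s,t}$ is a continuous family of characters of $G$. Characters of $G$ factor through $G^{\ab}$, which is a quotient of $(B_1)_{S_4}\times S_4^{\ab}$. The argument in \autoref{lem: no characters} shows $(\Lambda_2)_{S_4}=0$, hence $(B_1)_{S_4}=0$. So $G^{\ab}$ is finite and the character group $\hom(G,\bc^*)$ is finite. Since $\bg_m\times\bg_m$ is connected, the determinant is constant and equals $\det\Psi_{1,1}$.
\end{itemize}

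The main obstacle I expect is Step 1: pinning down that the induced representation is exactly $\rho_3$ and not $\rho_3\otimes\sgn$, which comes down to the correct choice of $\psi$ on $D$. The remaining steps are formal once the $D$-invariance of $\chi_{s,t}$ is in place.
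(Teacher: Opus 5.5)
Your proof is correct, and the family you build is the paper's family written as an induced representation. The paper uses the basis $v_1=e_1+e_2-e_3-e_4$, $v_2=e_1-e_2+e_3-e_4$, $v_3=e_1-e_2-e_3+e_4$ of $\Lambda_3\otimes_\bz\bc$. This basis diagonalizes $V_4$, and $S_4$ permutes its lines through $q$. The paper then sets $\Psi_{s,t}((\underline a,\underline b),\sigma)=\diag(s^{a_1}t^{b_1},s^{a_2}t^{b_2},s^{a_3}t^{b_3})\,\rho_3(\sigma)$. This is exactly the monomial matrix form of your $\mathrm{Ind}_{B_1\rtimes D}^{G}(\psi\chi_{s,t})$, with $D$ the stabilizer of the line $\bc v_1$.

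What differs is how the properties are checked.
\begin{enumerate}
\item \emph{Homomorphism.} The paper verifies this by hand through the identity $\rho_3(\sigma)\psi_s(\underline a)\rho_3(\sigma)^{-1}=\psi_s(\sigma(\underline a))$. Induction gives it to you once $\chi_{s,t}$ is $D$-invariant.
\item \emph{Irreducibility.} The paper restricts to $S_4$, where $\Psi_{s,t}$ is $\rho_3$. Your Mackey argument is fine, but Mackey also gives this shorter route at once: $S_4\cdot(B_1\rtimes D)=G$ and $S_4\cap(B_1\rtimes D)=D$, so the restriction to $S_4$ is $\mathrm{Ind}_D^{S_4}\psi\cong\rho_3$.
\item \emph{Determinant.} The paper computes $s^{\sum a_i}t^{\sum b_i}\sgn(\sigma)$ directly. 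Your argument needs no computation and applies to any continuous deformation: $\hom(G,\bc^*)$ is finite because $(\Lambda_2)_{S_4}=0$, and $\bg_m\times\bg_m$ is connected.
\item \emph{Non-constancy.} The paper's traces $s$ and $t$, at $(((1,-1,0),0),(12))$ and $((0,(1,-1,0)),(12))$, show that $(s,t)\mapsto[\Psi_{s,t}]$ is injective. Your trace $s+s^{-1}+1$ gives less than that, but non-constancy is all the lemma asks.
\end{enumerate}

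The obstacle you expect in Step 1 does not arise. You define $\psi$ as the character by which $D$ acts on the $\lambda_1$-eigenline inside $\rho_3$. The three eigenlines form a transitive system of imprimitivity with stabilizer $D$, so $\rho_3\otimes\bc\cong\mathrm{Ind}_D^{S_4}\psi$ holds automatically, and no character table or choice between extensions is needed. Concretely, $\psi$ is $+1$ on $(12)$ and $(34)$ and $-1$ on $(13)(24)$. You only need to take $\lambda_1$ to be the character with kernel $\{e,(12)(34)\}$, so that its stabilizer is your $D$.
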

\begin{proof}
    First, we choose a basis of $\Lambda_3 \otimes_\bz \bc = \{(a_1,a_2,a_3,a_4) \in \bc^4 \mid \sum_i a_i = 0 \}$ so we can identify $\gl_3(\Lambda_3 \otimes_\bz \bc ) \cong \gl_3(\bc)$. Since $V_4$ is abelian, we can choose a basis such that $\rho_3|_{V_4}$ is diagonal. We choose the basis \begin{align*}
    v_1&=e_1+e_2-e_3-e_4,\\
    v_2&=e_1-e_2+e_3-e_4,\\
    v_3&=e_1-e_2-e_3+e_4
    \end{align*}
    where $e_i$ are the standard basis vectors in $\bc^4$. 
    Then $S_4$ permutes the one-dimensional subspaces spanned by the basis vectors via $S_4 \twoheadrightarrow S_3$, more precisely, \begin{align}\label{eq: permutation basis}
        \sigma \cdot v_i = \lambda_{\sigma,i} \cdot v_{q(\sigma)(i)}
    \end{align}
    where $q(\sigma)$ is its image under $q: S_4 \twoheadrightarrow S_3$ and $\lambda_{\sigma,i} = \pm 1$. We then define \begin{align}\label{map: basis}
         \Lambda_3 \otimes_\bz \bc 
        &\xrightarrow{\sim} \bc^3 \\
        v_i &\mapsto e_i \nonumber
    \end{align}
    where the $e_i$ on the right-hand side are the standard basis vectors of $\bc^3$. This induces  $\gl_3(\Lambda_3 \otimes_\bz \bc ) \cong \gl_3(\bc)$ and we view \begin{align}\label{map: basis2}\rho_3: S_4 \rightarrow \gl_3(\bc)\end{align} via this identification.

    Next, for $t \in \bg_m$ we define a family of representations \begin{align*}\psi_t: \Lambda_2= \{(a_1,a_2,a_3) \in \bz^3 \mid \sum_i a_i = 0 \} &\rightarrow \gl_3(\bc) \\ (a_1,a_2,a_3) &\mapsto \diag(t^{a_1},t^{a_2},t^{a_3}).\end{align*} After choosing a basis of $H_1(E,\bz)$ we view $H_1(E,\bz) \otimes_\bz \Lambda_2 \cong \Lambda_2 \oplus \Lambda_2$ as an $S_4$-representation.
    We claim that for $(s,t) \in \bg_m\times\bg_m$,  \begin{align*}
        \Psi_{s,t}: (H_1(E,\bz) \otimes_\bz \Lambda_2) \rtimes S_4 &\rightarrow \gl_3(\bc) \\
        ((\underline{a},\underline{b}), \sigma) &\mapsto \psi_s(\underline{a})\psi_t(\underline{b})\rho_3(\sigma)
    \end{align*}
    is a well-defined representation. To prove that, we need to show the identity  \begin{align}\label{eq: matrices}
        \rho_3(\sigma)\psi_s(\underline{a})\rho_3(\sigma)^{-1} = \psi_s(\sigma(\underline{a}))
    \end{align}
    for $\sigma \in S_4$, $\underline{a} \in 
    \Lambda_2$. \autoref{eq: matrices} will roughly follow since the permutation action of $S_4$ on the one-dimensional subspaces spanned by $v_1,v_2,v_3$ is the same as on $(a_1,a_2,a_3) \in \Lambda_2$. More precisely, for $\sigma \in S_4$, $\underline{a} = (a_1,a_2,a_3) \in \Lambda_2$, $j = q(\sigma)^{-1}(i)$ we have 
    \begin{align*}
    \rho_3(\sigma)\psi_s(\underline{a})\rho_3(\sigma)^{-1}(v_i) &=
    \rho_3(\sigma)\psi_s(\underline{a})\lambda_{\sigma,i}^{-1} v_{j} \\ &=\rho_3(\sigma)s^{a_j}\lambda_{\sigma,i}^{-1} v_{j}
    \\ &=s^{a_j}v_i
    \\ &=\psi_s(\sigma(\underline{a}))v_i
    \end{align*}

    \noindent
    Then \begin{align*}
        \Psi_{s,t}(((\underline{a},\underline{b}), \sigma))\Psi_{s,t}(( (\underline{a'},\underline{b'}), \sigma')) &= \psi_s(\underline{a})\psi_t(\underline{b})\rho_3(\sigma)\psi_s(\underline{a'})\psi_t(\underline{b'})\rho_3(\sigma') \\ 
        &= \psi_s(\underline{a})\psi_t(\underline{b})\psi_s(\sigma(\underline{a'}))\psi_t(\sigma(\underline{b'}))\rho_3(\sigma)\rho_3(\sigma') \\ 
    & = \psi_s(\underline{a} + \sigma(\underline{a'}))\psi_t(\underline{b} + \sigma(\underline{b'}))\rho_3(\sigma)\rho_3(\sigma') \\
    &=\Psi_{s,t}(((\underline{a},\underline{b}), \sigma)\cdot( (\underline{a'},\underline{b'}), \sigma')),
    \end{align*}
    so the $\Psi_{s,t}$ are well-defined representations.

To show irreducibility, it suffices to show irreducibility on the subgroup $S_4 < (H_1(E,\bz) \otimes_\bz \Lambda_2) \rtimes S_4$, where we identify $S_4 \cong \{(0,\sigma) \in (H_1(E,\bz) \otimes_\bz \Lambda_2) \rtimes S_4\}$. By definition of $\Psi_{s,t}$, restricted to $S_4 < (H_1(E,\bz) \otimes_\bz \Lambda_2) \rtimes S_4$, it is isomorphic to $\rho_3$, hence irreducible.

To show that they are non-isomorphic representations, we compute their traces \begin{align*}
        \tr \Psi_{s,t}(((1,-1,0),(0,0,0)), (12)) &= \tr (\psi_s(1,-1,0)\psi_t(0,0,0)\rho_3((12))) \\ &=\tr 
        (\begin{pmatrix}
        s&0&0\\
        0&s^{-1}&0\\
        0&0&1
        \end{pmatrix}
        \begin{pmatrix}
        1&0&0\\
        0&1&0\\
        0&0&1
        \end{pmatrix}
        \begin{pmatrix}
        1&0&0\\
        0&0&-1\\
        0&-1&0
        \end{pmatrix})
        \\ &=\tr 
        \begin{pmatrix}
        s&0&0\\
        0&0&-s^{-1}\\
        0&-1&0
        \end{pmatrix}
         \\ &= s
        \\ \tr \Psi_{s,t}(((0,0,0),(1,-1,0)), (12)) &= \tr (\psi_s(0,0,0)\psi_t(1,-1,0)\rho_3((12))) \\ &=\tr 
        (\begin{pmatrix}
        1&0&0\\
        0&1&0\\
        0&0&1
        \end{pmatrix}
        \begin{pmatrix}
        t&0&0\\
        0&t^{-1}&0\\
        0&0&1
        \end{pmatrix}
        \begin{pmatrix}
        1&0&0\\
        0&0&-1\\
        0&-1&0
        \end{pmatrix})
        \\ &=\tr 
        \begin{pmatrix}
        t&0&0\\
        0&0&-t^{-1}\\
        0&-1&0
        \end{pmatrix}
         \\ &= t.
    \end{align*} Hence for $(s,t) \neq (s',t')$ we have $\Psi_{s,t}$ is not isomorphic to $\Psi_{s',t'}$. 
    Finally, $\det \Psi_{s,t}(((\underline{a},\underline{b}), \sigma)) = s^{\sum a_i} t^{\sum b_i} \sgn(\sigma) = \det \Psi_{1,1}(((\underline{a},\underline{b}), \sigma))$.
\end{proof}

\begin{proof}[Proof of \autoref{privileged non rigid}]
    We first show that there is a smooth family $\bv_{s,t}$ in $M_{\betti}(X,3, \det(\bv))$ such that $\bv_{1,1} = \bv$. Using the family $\Psi_{s,t}$ from \autoref{lem: smooth family}, we get a smooth non-constant family of irreducible representations \begin{align*}\bv_{s,t}: \pi_1(X,x)\twoheadrightarrow (H_1(E,\bz) \otimes_\bz \Lambda_2) \rtimes S_4 \xrightarrow{\Psi_{s,t}} \gl_3(\bc),\end{align*} with $\bv_{1,1} \cong \bv$. 

    In order to show $\bv$ is privileged, we need to compute the Zariski tangent space of every local system in $M_{\betti}(X,3, \det(\bv))$. Let $\bw \in M_{\betti}(X,3, \det(\bv))$. By considerations in \autoref{subsec: general idea}, after restricting to $\pi_1(Y,y)$, we have $\bw|_{\pi_1(Y,y)} = \chi_1 \oplus \chi_2 \oplus \chi_3$.

    In the case where the $\chi_i$ are isomorphic, $S_4$ fixes the character. By \autoref{lem: no characters} there are no non-trivial characters of $\pi_1(Y,y)$ fixed by $S_4$, hence $\chi$ is the trivial character. Then $\bw$ factors through $S_4$, and there is only one irreducible rank 3 representation of determinant $\det(\bv)$, i.e. \[\bw \cong \bv.\] By Case $1$ in \autoref{subsec: general idea} we then get
    \begin{align*}
        \dim H^1(X, \End^{0}(\bv)) &= \dim \hom_{S_4}(\pi_1(Y,y) \otimes_\bz \bc, \End^{0}(\bv)) \\ &= \dim \hom_{S_4}((\Lambda_3 \oplus \Lambda_2)^{\oplus 2}\otimes_\bz \bc, (\Lambda_3 \oplus \Lambda_2 \oplus (\Lambda_3 \otimes_\bz \sgn))\otimes_\bz \bc) \\ &= 4.
    \end{align*}
    
    \noindent
    In the case where the $\chi_i$ are pairwise non-isomorphic, we have by Case $2$ in \autoref{subsec: general idea} \begin{align*}
        \dim H^1(X, \End^{0}(\bw)) &= \dim \hom_{S_4}(\pi_1(Y,y) \otimes_\bz \bc, \Lambda_2 \otimes_\bz \bc)  \\ &= \dim \hom_{S_4}((\Lambda_3 \oplus \Lambda_2)^{\oplus 2}\otimes_\bz \bc, \Lambda_2 \otimes_\bz \bc) \\ &= 2.
    \end{align*} 
    Note that $\hom_{S_4}(\Lambda_3 \otimes_\bz \bc, \Lambda_2 \otimes_\bz \bc) = 0$ by Schur's Lemma since they are both irreducible complex non-isomorphic representations of $S_4$. 

    Thus $\bv$ is the unique point with tangent
    dimension 4, and every other point has tangent dimension 2. Hence $\bv$ is privileged. The non-constant family $\bv_{s,t}$ proves
    that it is not rigid.
\end{proof}

 \begin{remark}
    As remarked in \cite[Introduction]{dejong2022rigidnoncohomologicallyrigidlocal},
    using the external tensor product of local systems we can attempt to create privileged but not rigid local systems of higher rank or of infinite monodromy. For instance, for an example of higher rank we could look at $\bv \boxtimes \bv$ on $X \times_\bc X$. However, the precise construction in this paper cannot be repeated with the example in \cite{dejong2022rigidnoncohomologicallyrigidlocal} for rank $r \geq 4$. This is because $S_{r+1}$ for $r \geq 4$ has no nontrivial proper quotients other than $C_2$, and we crucially use the fact that $S_3$ is a quotient of $S_4$.
 \end{remark}

\bibliographystyle{alpha}
\bibliography{bibliography-privileged.bib}

\end{document}